\newtheorem{theorem}{Theorem}[section]
\newtheorem{lemma}[theorem]{Lemma}
\newtheorem{corollary}[theorem]{Corollary}
\theoremstyle{definition}
\newtheorem{thml}{Theorem}
\theoremstyle{remark}
\numberwithin{equation}{section}
\newcommand{\GL}{{\mathrm {GL}}}
\newcommand{\PGL}{{\mathrm {PGL}}}
\newcommand{\SL}{{\mathrm {SL}}}
\newcommand{\PSL}{{\mathrm {PSL}}}
\newcommand{\POmega}{{\mathrm {P\Omega}}}
\newcommand{\PSU}{{\mathrm {PSU}}}
\newcommand{\SO}{{\mathrm {SO}}}
\newcommand{\Sp}{{\mathrm {Sp}}}
\newcommand{\PSp}{{\mathrm {PSp}}}
\newcommand{\Aut}{{\mathrm {Aut}}}
\newcommand{\Irr}{{\mathrm {Irr}}}
\newcommand{\diag}{{\mathrm {diag}}}
\newcommand{\Syl}{{\mathrm {Syl}}}
\newcommand{\St}{{\mathsf {St}}}
\newcommand{\Sym}{{\mathrm {Sym}}}
\newcommand{\Alt}{{\mathrm {Alt}}}
\newcommand{\ZZ}{{\mathbb Z}}
\newcommand{\FF}{{\mathbb F}}
\newcommand{\wt}{\widetilde}
\newcommand{\bC}{{\mathbf{C}}}
\newcommand{\bO}{{\mathbf{O}}}
\newcommand{\bN}{{\mathbf{N}}}
\newcommand{\tw}[1]{{}^#1}
\def\nor{\trianglelefteq\,}
\def\irr#1{{\rm Irr}(#1)}
\def\sbs{\subseteq}
\def\syl#1#2{{\rm Syl}_#1(#2)}
\def\oh#1#2{{\bf O}_{#1}(#2)}
\def\zent#1{{\bf Z}(#1)}
\def\norm#1#2{{\bf N}_{#1}(#2)}
\def\cent#1#2{{\bf C}_{#1}(#2)}
\def\irrp#1#2{{\rm Irr}_{#1'}(#2)}
\newcommand{\aut}{\operatorname{Aut}}
\newcommand{\type}{\operatorname}
\begin{document}

\title[Principal blocks with six characters]
{Principal blocks with six\\ ordinary irreducible characters}

\author[N.\,N. Hung]{Nguyen N. Hung}
\address[N.\,N. Hung]{Department of Mathematics, The University of Akron, Akron,
OH 44325, USA} \email{hungnguyen@uakron.edu}

\author[A.\,A. Schaeffer Fry]{A.\,A. Schaeffer Fry}
\address[A.\,A. Schaeffer Fry]{Dept. Mathematics and Statistics, MSU Denver, Denver, CO 80217, USA}
\email{aschaef6@msudenver.edu}

\author{Carolina Vallejo}
\address[C. Vallejo]{Dipartimento di Matematica e Informatica `Ulisse Dini', 50134 Firenze (Italy)}
\email{carolina.vallejorodriguez@unifi.it}

\thanks{The first author is grateful for the support of an UA
Faculty Research Grant. The second-named author is grateful for the
support of a grant from the National Science Foundation, Award No.
DMS-2100912. The third author, as part of the GNSAGA, is grateful for the support of the {\em Istituto Nazionale di Alta Matematica} (INDAM).
The authors also thank Gunter Malle for comments on an earlier draft.}

\subjclass[2010]{Primary 20C20, 20C15, 20C33.}
\keywords{Principal block, defect group, irreducible character,
Sylow subgroup}


\begin{abstract}
We classify Sylow $p$-subgroups of finite groups whose principal
$p$-blocks have precisely six ordinary irreducible characters.
\end{abstract}

\maketitle


\centerline{\sf In memory of Georgia Benkart, an inspiration to all mathematicians.}

\section{Introduction}

Classifying finite groups with a given (relatively small) number of
conjugacy classes is a classical and natural problem in finite group
theory \cite{VV85, SV07}. Such classification not only provides a
handy source for testing several predictions/conjectures involving
the class number, but also plays a critical role in the proofs of
theorems that require ad hoc arguments for groups with a small
number of conjugacy classes, see \cite{HHKM11,Maroti16} for
instance.

Given a positive integer $k$, it is expected that there are finitely
many isomorphism classes of groups that can
occur as defect groups of blocks with $k$ ordinary irreducible
characters (this, often referred to as Brauer's Problem 21
\cite{Brauer}, has been shown by K\"{u}lshammer and Robinson
\cite{Kulshammer-Robinson} to be a consequence of the Alperin-McKay
conjecture \cite[Conjecture~9.5]{Navarro18}). The problem of
determining all possible structures of a defect group of a block
that has a given number of irreducible characters can be viewed as
the modular analogue of the above problem on conjugacy classes. As
one should anticipate, the modular problem is much harder and
largely open -- for instance, it is still unknown whether a block
has precisely $3$ ordinary characters if and only if its defect
group has order $3$ (see \cite[Section 4]{KNST14} and
Table~\ref{table:1} at the end of this paper).

Let $p$ be a prime. Recall that the principal $p$-block of a finite
group $G$ is the one containing the principal character ${\bf 1}_G$
and that its defect groups are the Sylow $p$-subgroups of $G$. Thanks to
the recent work of Koshitani-Sakurai \cite{KS21} and of
Rizo-Schaeffer Fry-Vallejo \cite{RSV21}, those Sylow $p$-subgroups
of finite groups with principal $p$-blocks having up to five
irreducible characters have been completely determined. This in turn
has contributed to the solution of the principal-block case
\cite{Hung-SchaefferFry} of H\'{e}thelyi-K\"{u}lshammer's conjecture
\cite{Hethelyi-kulshammer} and to obtaining a $p$-local lower bound
for the number of height-zero characters in principal blocks
\cite{HSV21}.

The purpose of this paper is to advance on the determination of the
structures of defect groups of blocks with a given number of ordinary
characters. Our main result classifies the defect groups of
principal blocks with six characters.

\begin{thml}\label{thm:kB0=6}
Let $G$ a finite group, $p$ a prime, and $P\in\Syl_p(G)$. Suppose
that the principal $p$-block of $G$ has precisely six irreducible ordinary
characters. Then $|P|=9$.
\end{thml}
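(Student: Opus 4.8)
The plan is to combine a small toolkit of generic lower bounds for $k(B_0(G))$, which disposes of all but finitely many configurations, with a case analysis governed by the generalized Fitting subgroup, the almost simple (hence classification-dependent) situation forming the core. As a first reduction, every irreducible character of $B_0(G)$ contains $\mathbf{O}_{p'}(G)$ in its kernel, so $B_0(G)$ and $B_0(G/\mathbf{O}_{p'}(G))$ share their irreducible characters and their defect groups; thus I would assume $\mathbf{O}_{p'}(G) = 1$ throughout. Since $k(B_0(G)) = 6 > 1$ the block has positive defect, so the Héthelyi--Külshammer inequality for principal blocks established in \cite{Hung-SchaefferFry}, namely $k(B_0(G)) \ge 2\sqrt{p-1}$, gives $p - 1 \le 9$, i.e. $p \in \{2,3,5,7\}$. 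As $|P| = 9$ is possible only for $p = 3$, it remains to exclude $p \in \{2,5,7\}$ and, for $p = 3$, to force $|P| = 9$.

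I would rely on three lower bounds for $k(B_0(G))$. First, if $N \trianglelefteq G$ then every character of $B_0(G)$ lies over $B_0(N)$, whence $k(B_0(G))$ is at least the number of $G$-orbits on $\Irr(B_0(N))$. Second, the $p$-local bound of \cite{HSV21} controls the number of height-zero characters of $B_0(G)$ from below by local data at $\mathbf{N}_G(P)$; for abelian $P$ all heights vanish and this bounds $k(B_0(G))$ itself. Third, for cyclic $P$, Dade's theory gives the exact value $k(B_0(G)) = e + (|P|-1)/e$ with $e \mid p-1$ the inertial index. Now take $N$ a minimal normal subgroup of $G$; since $\mathbf{O}_{p'}(G) = 1$, it is either an elementary abelian $p$-group or a direct power $S^t$ of a nonabelian simple $S$ with $p \mid |S|$. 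This produces the dichotomy between the case $F^*(G) = \mathbf{O}_p(G)$ (no components) and the case in which $G$ has a component of order divisible by $p$.

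In the case without components one has $C_G(\mathbf{O}_p(G)) \le \mathbf{O}_p(G)$, and I would reduce---via the theory of the principal block over $\mathbf{O}_p(G)$ and, when the defect group is normal, the unique-block phenomenon $k(B_0(G)) = k(G) = \sum_{\lambda} k(E_\lambda)$ summed over inertial-quotient orbit representatives $\lambda \in \Irr(P)$---to an orbit-sum computation for the $p'$-group $E = \mathbf{N}_G(P)/P\,C_G(P)$ acting on $\Irr(P)$. Combined with the $p$-local bound, this shows that for $p \in \{2,5,7\}$ the value $6$ is never attained (the cyclic cases $|P| \in \{p,p^2\}$ being settled directly by the formula above, and larger $P$ forced past $6$), whereas for $p = 3$ one obtains exactly $6$ only at $|P| = 9$: the orders $|P| = 3$ and $|P| \ge 27$ give $3$ and at least $7$ respectively. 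Non-abelian $P$ is excluded using Brauer's (now proven) height-zero theorem together with the classifications of $k(B_0) \le 5$ in \cite{KS21,RSV21}.

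The decisive and hardest step is when $G$ has a component $S^t$ with $p \mid |S|$. Here I would feed the orbit bound $k(B_0(G)) \ge |\Irr(B_0(S^t))/G|$ with lower bounds on $k(B_0(S))$ for every simple $S$ divisible by $p$, obtained family-by-family from the classification of finite simple groups: Dade theory in cyclic-defect cases, explicit block data for groups of Lie type in both defining and non-defining characteristic, and the alternating and sporadic groups. Since $k(B_0(S)) \ge 3$, only finitely many pairs $(S,p)$ survive, and the powers $t \ge 2$---where the crude orbit count can equal $6$, as for $\Al_5 \wr C_2$---are eliminated by the finer Clifford analysis, which strictly exceeds the orbit number once $G$-invariant constituents extend in several ways. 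The two genuine obstacles are (i) proving sharp uniform lower bounds for $k(B_0(S))$ across the Lie-type families, and (ii) tracking precisely how $\Out(S)$ and Galois automorphisms fuse or split the characters of $B_0(S)$ when passing to an almost simple overgroup, since this determines the exact value of $k(B_0(G))$. Resolving these pins every surviving configuration---most prominently $\PSL_2(9) \cong \Al_6$ and its relatives---to a Sylow $3$-subgroup of order $9$, completing the proof.
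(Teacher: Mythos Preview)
Your overall strategy---H\'ethelyi--K\"ulshammer to bound $p$, Dade for cyclic defect, orbit counting over a minimal normal subgroup, and CFSG-based lower bounds on $k(B_0(S))$---matches the paper's, and your anticipation that the almost-simple analysis is the decisive step is correct (the paper's Theorem~\ref{thm:simplegroups3} does exactly this for $p=3$ with abelian Sylow). But there is a genuine gap in your reduction phase. The paper's key opening move is to analyse $k_0(B_0)$ rather than $k(B_0)$: the Sylow structure for $k_0(B_0)\le 5$ is classified in \cite{HSV21} (not \cite{KS21,RSV21}, which concern $k(B_0)\le 5$---a different statement), and combined with the height-zero conjecture for principal blocks \cite{MN21} and Landrock's parity result \cite{Landrock81} this forces either $P$ abelian with $p$ odd and $k_0(B_0)=6$, or $k_0(B_0)=4$, $p=2$, and $P$ nonabelian of maximal class. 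The latter is then eliminated via Sambale's results on $2$-blocks with metacyclic defect (Lemma~\ref{lem:Sambale}), which would force a nilpotent block with $k(P)=6$, impossible for a $p$-group. Your proposal does not supply this last step, so your exclusion of $p=2$ and of nonabelian $P$ is incomplete.

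There is a second gap in your ``no-component'' case. The hypothesis $F^*(G)=\mathbf{O}_p(G)$ does not make $P$ normal, nor $G$ $p$-solvable: take $G=V\rtimes\Alt_5$ with $V$ a faithful $\FF_2\Alt_5$-module, where $F^*(G)=V$ but $P>V$ and $G$ is not $2$-solvable. Hence the orbit-sum formula $k(B_0(G))=k(G)=\sum_\lambda k(E_\lambda)$ for normal defect groups that you invoke is unavailable in general. The paper circumvents this entirely: having already reduced to $P$ abelian and $p\in\{3,5,7\}$, it dispatches the $p$-solvable case in one stroke via Fong's theorem ($k(G)=k(B_0)=6$) and the explicit list of groups with six classes in \cite{VV85}, and only then takes a minimal normal $N$ and splits on whether $p$ divides $|G{:}N|$---not on components. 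That case split, with the abelian-$P$ hypothesis already secured, is what makes the inductive argument go through cleanly.
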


Let $B_0(G)$, or sometimes just $B_0$, denote the principal
$p$-block of $G$, and let  $k(B_0)$ denote the number of ordinary
irreducible characters of $B_0$. Our strategy for proving Theorem
\ref{thm:kB0=6}, which is somewhat different from the above-mentioned previous
work on $k(B_0)\in \{4,5\}$, is to analyse the number $k_0(B_0)$ of
those characters in $B_0$ of \emph{height zero}. One of the main
results of \cite{HSV21} already classifies the Sylow structure of
finite groups with $k_0(B_0)\leq 5$, and perhaps surprisingly, none
of these possibilities could occur when $k(B_0)=6$. Therefore, we
are left to deal with $k(B_0)=k_0(B_0)=6$, in which case the Sylow
subgroup $P$ must be abelian, by the recent proof of Brauer's height
zero conjecture for principal blocks by Malle and Navarro
\cite{MN21} (note that the more general case of the conjecture has
now also been proved \cite{MNST}).

Our proof makes use of the Classification of Finite Simple Groups. In
particular, we have to find lower bounds for the number of
$\aut(S)$-orbits of irreducible characters in the principal block of
a non-abelian simple group $S$. This has been recently studied in
connection with other problems on character degrees and character
bounds \cite{Martinez,Hung-SchaefferFry}. With this, we are able to
restrict ourselves to studying the $\aut(S)$-orbits of characters in
the principal $3$-blocks of such an $S$ with an abelian Sylow
$3$-subgroup, see Theorem \ref{thm:simplegroups3}.


There are two isomorphism classes of groups of order $9$, namely
${\sf {C}}_9$ and ${\sf C}_3 \times {\sf C}_3$. Either of them can
occur as the defect group of a principal block with 6 ordinary
characters, as shown in semidirect products of ${\sf C}_2$ acting on
${\sf {C}}_9$ and ${\sf C}_3 \times {\sf C}_3$ by inversion. With that being said, the `full inverse' of Theorem \ref{thm:kB0=6} is not
always true: if $|P|=9$ then $k(B_0)$ could be either $6$ or $9$
(see the proof of Theorem~\ref{thm:kB0=6-main-repeated}). The next
result, which fully characterizes finite groups with $k(B_0)=6$ in
terms of $p$-local structure, offers a more complete version of
Theorem~\ref{thm:kB0=6}.

\begin{thml}\label{thm:kB0=6-main}
Let $G$ a finite group, $p$ a prime, and $P\in\Syl_p(G)$. Let $B_0$
denote the principal $p$-block of $G$. Then $k(B_0)=6$ if and only
if precisely one of the following holds:
\begin{itemize}
\item[(i)] $P= {\sf C}_9$ and $|\norm G P :\cent G P |=2$.

\item[(ii)] $P={\sf C}_3 \times {\sf C}_3$ and either $\norm G P /\cent G P \in\{{\sf C}_4, {\sf
Q}_8\}$ or $\norm G P /\cent G P\cong {\sf C}_2$ acts fixed-point
freely on $P$.

\end{itemize}
\end{thml}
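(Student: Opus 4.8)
The plan is to prove Theorem~\ref{thm:kB0=6-main} by combining Theorem~\ref{thm:kB0=6} with a careful analysis of the two order-$9$ groups and the action of the $p$-local normalizer data. Theorem~\ref{thm:kB0=6} already tells us that $k(B_0)=6$ forces $|P|=9$, so one direction reduces to checking that each configuration in (i) and (ii) actually yields $k(B_0)=6$, while the converse amounts to showing that when $|P|=9$, the listed $p$-local conditions are exactly the ones producing six characters rather than some other value.

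Let me think about this carefully. The key tool is that for $p$ odd and $P$ abelian, the principal block $B_0(G)$ is Morita equivalent (or at least has the same character-count invariants) to the principal block of $N_G(P)$; more elementarily, by Brauer's first main theorem together with the fact that for abelian defect groups the inertial quotient $E = N_G(P)/C_G(P)$ controls the block, one has $k(B_0(G)) = k(B_0(N_G(P)))$, and the latter is governed by the action of $E$ on $\Irr(P)$. So my first step would be to reduce, via this $p$-local reduction, to computing $k(B_0)$ for the groups $P \rtimes E$ where $E$ ranges over the possible inertial quotients. For an abelian $p$-group $P$ with a $p'$-group $E$ acting, the number of irreducible characters in the principal block equals the number of $E$-orbits on $\Irr(P) \cong P$, counted with the characters of $E$-stabilizers; concretely $k(B_0(P\rtimes E)) = \sum_{\lambda} k(E_\lambda)$ where $\lambda$ runs over representatives of $E$-orbits on $\Irr(P)$ and $E_\lambda$ is the stabilizer.

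The heart of the argument is then the enumeration. For $P = {\sf C}_9$, the automorphism group is ${\sf C}_6$, and the admissible inertial quotients $E \le \Aut(P)$ of order prime to $3$ are ${\sf C}_1$ and ${\sf C}_2$; a direct orbit count on the nine linear characters of ${\sf C}_9$ gives $k(B_0) = 9$ when $E={\sf C}_1$ and $k(B_0)=6$ when $E = {\sf C}_2$ (the five nontrivial orbits pair up under inversion, yielding $1 + 1 + 4 = 6$), which is exactly condition (i). For $P = {\sf C}_3\times{\sf C}_3$, we have $\Aut(P) = \GL_2(3)$ of order $48$, and I would run through its $3'$-subgroups up to conjugacy, computing the number of orbits-plus-stabilizer-characters on the nine linear characters in each case. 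The subgroups yielding exactly six are precisely ${\sf C}_4$, ${\sf Q}_8$, and the fixed-point-free ${\sf C}_2$ (scalar inversion), matching (ii); every other admissible $E$ gives a value different from $6$, and I would tabulate these to confirm exhaustiveness.

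The main obstacle I anticipate is twofold. First, I must justify rigorously that $k(B_0(G))$ depends only on the $p$-local data $(P, E)$ and not on the global structure of $G$—this requires invoking the appropriate block-theoretic reduction for abelian defect groups (the relevant theorem of K\"ulshammer or the consequences of the now-proven Alperin--McKay / height-zero results cited in the introduction), and being careful that for principal blocks the inertial quotient $E = N_G(P)/C_G(P)$ genuinely determines $k(B_0)$ via the count above. Second, the enumeration over $3'$-subgroups of $\GL_2(3)$, while finite and mechanical, must be done correctly up to the right notion of equivalence (conjugacy in $\Aut(P)$), and I must verify that the stated group-theoretic descriptions (${\sf C}_4$, ${\sf Q}_8$, or fixed-point-free ${\sf C}_2$) capture exactly the right conjugacy classes and are phrased in the isomorphism-type-plus-action form appearing in the statement. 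Once the count $k(B_0)=6 \iff E$ is on the prescribed list is verified in both the cyclic and elementary abelian cases, the theorem follows by assembling the two directions.
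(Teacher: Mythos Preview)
Your outline is essentially the same as the paper's: reduce via Theorem~\ref{thm:kB0=6} to $|P|=9$, then pass to the local subgroup $\bN_G(P)$ and enumerate over the possible inertial quotients $E=\bN_G(P)/\bC_G(P)$ inside $\Aut(P)$. The enumeration you describe matches the paper's (which also checks all $3'$-subgroups of $\GL_2(3)$ and finds only ${\sf C}_4$, ${\sf Q}_8$, and scalar ${\sf C}_2$ give six classes).

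The one place where you are vaguer than the paper, and where a referee would press you, is precisely the obstacle you already flagged: the reduction $k(B_0(G))=k(B_0(\bN_G(P)))$. This is \emph{not} a consequence of Brauer's first main theorem (which only matches blocks, not their numerical invariants), and invoking ``K\"ulshammer'' or ``Alperin--McKay'' without further specification will not suffice in general. The paper handles the two cases with two different, specific, known results: for $P={\sf C}_9$ it quotes Dade's cyclic-defect formula $k(B_0)=f+(|P|-1)/f$ directly, and for $P={\sf C}_3\times{\sf C}_3$ it invokes Koshitani--Kunugi's verification of Brou\'e's abelian defect conjecture for principal $3$-blocks with defect group of order~$9$, which yields $k_0(B_0(G))=k_0(B_0(\bN_G(P)))$; combined with $k=k_0$ (abelian defect, \cite{KM13}) this gives the desired equality. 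You should cite these explicitly rather than leave the reduction as an unspecified ``appropriate block-theoretic reduction''.
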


Proofs of the main theorems are contained in Section \ref{sec:3} and
the necessary results on finite simple groups are proved in Section
\ref{sec:2}. At the end of the paper, we provide a summary of what is known about the structure of defect groups of small blocks in Table \ref{table:1}.

%
%


\section{Principal blocks of simple groups}\label{sec:2}

In this section, we prove the following statements on simple groups,
which will be needed for the proof of our main theorems.

\begin{theorem}\label{thm:simplegroups3}
Let $S$ be a non-abelian simple group. Let $p=3$ and let
$B_0:=B_0(S)$ be the principal 3-block of $S$. Assume that $Q \in
\syl p S$ is abelian and $|Q|\geq 9$. If $S \leq A \leq \aut (S)$,
then one of the following holds:
\begin{enumerate}
\item[{\rm (a)}] The action of $A$ defines at least $4$ orbits on $\irr{B_0}\setminus \{ {\bf 1}_S \}$.
\item[{\rm (b)}] $S \in \{ {\rm PSL}_2(q), {\rm PSL}_3(q), {\rm PSU}_3(q) \}$ with  $(3, q)=1$, 
$(|A:S|, 3)=3$, and the action of $A$ induces $3$ orbits on
$\irr{B_0(S)}\setminus \{ {\bf 1}_S\}$. In this case, $k(B_0(A))>6$.
\item[{\rm (c)}] $S=\Alt_6=\PSL_2(3^2)$, $A$ has a subgroup isomorphic to ${\rm
M}_{10}$, and the action of $A$ induces $3$ orbits on
$\irr{B_0(S)}\setminus \{ {\bf 1}_S\}$.
\end{enumerate}
\end{theorem}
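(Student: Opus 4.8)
The plan is to run a case analysis over the non-abelian simple groups $S$ admitting an abelian Sylow $3$-subgroup $Q$ with $|Q|\geq 9$, using the Classification. Invoking the known list of simple groups with abelian Sylow $3$-subgroups (a consequence of CFSG), the candidates split into: the alternating groups $\Alt_6,\Alt_7,\Alt_8$; the defining-characteristic family $\PSL_2(3^f)$ with $f\geq 2$; the non-defining families $\PSL_2(q)$, $\PSL_3(q)$ and $\PSU_3(q)$ with $3\nmid q$; a short list of sporadic groups; and the few remaining groups of Lie type whose Sylow $3$-subgroup happens to be abelian of order $\geq 9$. For each group I would first pin down $\irr{B_0(S)}$, and then count the orbits of the action of $A$ on $\irr{B_0(S)}\setminus\{{\bf 1}_S\}$, the goal being to show this number is at least $4$ outside of the exceptional cases (b) and (c).

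For the enumeration of $\irr{B_0(S)}$ I would use abelian-defect block theory. When $Q$ is cyclic the principal block is a Brauer tree algebra, so $k(B_0)=e+(|Q|-1)/e$ with $e=l(B_0)$ the inertial index; for $\PSL_2(q)$ one has $e=2$, giving for instance exactly six block characters when $|Q|=9$. When $Q\cong {\sf C}_3\times{\sf C}_3$ the possible values of $k(B_0)$ are tightly constrained and the characters can be read off from explicit (generic) character tables. The automorphism action is then analysed through the field, diagonal and graph automorphisms: for $\PSL_2(q)$, $\PSL_3(q)$ and $\PSU_3(q)$ I would use the explicit descriptions of the characters (by their semisimple, unipotent and cuspidal labels) and of how $\aut(S)$ permutes them, building on the orbit-counting results of \cite{Martinez,Hung-SchaefferFry}.

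The decisive point is to separate the generic situation, where at least four $A$-orbits survive on the nontrivial block characters, from the two mechanisms that collapse the count to three. The first is an order-$3$ automorphism fusing a triple of characters into a single orbit: this is diagonal for $\PSL_3(q)$ when $q\equiv 1\bmod 3$ and for $\PSU_3(q)$ when $q\equiv 2\bmod 3$, and it is a field automorphism in the $\PSL_2(q)$ case; in all instances it forces $3\mid|A:S|$, which is exactly case (b). There I must still verify $k(B_0(A))>6$, which I would do by Clifford theory over the normal subgroup $S$: the trivial character and the $A$-fixed nontrivial block characters each yield three extensions to $A$, already pushing the total of $\irr{B_0(A)}$ well past six. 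The second mechanism is the exceptional isomorphism $\Alt_6=\PSL_2(3^2)$ together with the extra automorphism realised inside ${\rm M}_{10}$, giving case (c); here I would simply compute with the small character table.

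The main obstacle I anticipate is controlling the automorphism action in the $\PSL_3(q)$ and $\PSU_3(q)$ families precisely enough to distinguish ``three orbits'' from ``four or more''. The interplay of the order-$3$ diagonal automorphisms with the graph automorphism (which inverts the relevant characters) and with the field automorphisms makes the orbit bookkeeping delicate, and small-$q$ boundary cases will likely need to be handled by hand. The remaining families---defining-characteristic $\PSL_2(3^f)$ with $3^f\neq 9$, the groups $\Alt_7,\Alt_8$, and the sporadic examples---should each produce at least four orbits by a more routine count, which I would dispatch using their known character tables and outer automorphism groups.
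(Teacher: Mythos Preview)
Your strategy is the paper's: a CFSG case split with the delicate work concentrated in $\PSL_2(q)$ and $\PSL_3^\epsilon(q)$, the remaining families handled by exhibiting enough $\aut(S)$-stable characters in $B_0(S)$. Two points in your outline would not go through as written.

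First, the phrase ``few remaining groups of Lie type whose Sylow $3$-subgroup happens to be abelian of order $\geq 9$'' badly underestimates the scope. For $3\nmid q$ this covers $\PSL_n^\epsilon(q)$ for all $n$, $\PSp_{2n}(q)$, $\POmega_{2n+1}(q)$, $\POmega_{2n}^\pm(q)$, and all exceptional types, subject only to mild valuation conditions. The paper does not attempt an ad hoc orbit count here; it invokes the unipotent-character analysis already carried out in \cite[Lemmas~3.7, 3.8, 3.10]{RSV21}, which for most parameter ranges yields five $\aut(S)$-invariant unipotent characters in $B_0(S)$, and then disposes of the residual cases $\PSL_4^\epsilon(q)$, $\PSL_5^\epsilon(q)$, $\PSp_4(q)$ by explicit $e$-core counts. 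Calling this ``a more routine count'' is optimistic; you would have to reproduce that work.

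Second, your argument for $k(B_0(A))>6$ in case (b) is not correct as stated. An $A$-invariant $\chi\in\irr{B_0(S)}$ that extends to $A$ does give $k(A/S)$ extensions in $\irr{A}$ by Gallagher, but there is no reason all of them lie in $B_0(A)$, so ``each yield three extensions to $A$'' overcounts. The paper isolates a separate lemma (Lemma~\ref{lem:2charabove}) showing that, when $p\mid|A:S|$ and $\chi$ extends to the full $\wt{S}\rtimes D$, one gets at least \emph{two} characters of $B_0(A)$ above $\chi$. Applying this to the $\aut(S)$-invariant characters (the unipotents, and in the $e=2$ case one further semisimple character) already gives six, and the genuinely fused orbit contributes at least one more. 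Relatedly, your description of the fusing mechanism in $\PSL_3^\epsilon(q)$ covers only the $e=1$ case (diagonal automorphism); when $e=2$ there is no order-$3$ diagonal automorphism and the fusing is by a field automorphism, as in $\PSL_2(q)$.
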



\begin{theorem}\label{thm:simplegroups}
Let $S$ be a non-abelian simple group. Let $p$ be an odd prime and
let $B_0:=B_0(S)$ be the principal $p$-block of $S$. Assume that $P
\in \syl p S$ is abelian. If $k(B_0)=6$, then $p=3$ and $|P|=9$.
\end{theorem}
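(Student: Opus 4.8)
The plan is to reduce Theorem~\ref{thm:simplegroups} to the already-stated Theorem~\ref{thm:simplegroups3}, which does the hard classification work at the prime $3$. So the first task is to rule out all odd primes $p$ other than $3$. The key observation is that $k(B_0(S))=6$ with $P$ abelian forces very strong numerical constraints. Since $S$ is simple and $P$ is abelian and nontrivial, the principal block $B_0(S)$ is nontrivial, and one always has ${\bf 1}_S \in \irr{B_0(S)}$; moreover the number of height-zero characters $k_0(B_0)$ equals $k(B_0)=6$ here because Brauer's height zero conjecture (now a theorem, via \cite{MN21,MNST}) guarantees that an abelian defect group forces all characters in the block to have height zero. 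I would therefore work with $k_0(B_0)=6$ throughout.

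The main engine is the congruence $k_0(B_0) \equiv |\norm{S}{P}:\cent{S}{P}|^{-1}$-type counting, but more usefully the well-known fact (Brauer, and refinements for principal blocks) that when $P$ is abelian, by the Alperin--McKay / Broué framework $k(B_0(S))$ is controlled by characters of the normalizer $\norm{S}{P}$, and in particular $k(B_0)$ is congruent to $|P| \bmod$ the relevant ramification. Concretely, I would invoke the inequality that the number of $\aut(S)$-orbits on $\irr{B_0}\setminus\{{\bf 1}_S\}$ gives a lower bound on how many distinct non-principal characters must appear: if there are at least $4$ such orbits (case (a) of Theorem~\ref{thm:simplegroups3}), then in particular $\irr{B_0(S)}$ already contains the trivial character together with at least $4$ further characters lying in distinct $\aut(S)$-orbits, and one must then count the total number of characters these orbits contribute. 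Because $\aut(S)$-orbits on $\irr{B_0(S)}$ have size dividing $|\Out(S)|$ and $S$ itself acts trivially, each nonprincipal orbit contributes at least one character; four such orbits plus the trivial character already push $k(B_0(S)) \geq 5$, and a short analysis of orbit sizes shows the count overshoots $6$ unless everything is tightly constrained, which will only happen at $p=3$.

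The heart of the argument is the case division supplied by Theorem~\ref{thm:simplegroups3}: for $p=3$ and $|Q|\geq 9$ it says that either (a) there are $\geq 4$ orbits, or we are in the exceptional families (b), (c) with exactly $3$ orbits. I would argue that $k(B_0(S))=6$ is incompatible with case (a): four nonprincipal $\aut(S)$-orbits together with the principal character give more than six characters once one accounts for the actual orbit sizes on $S$ (the orbits are $\aut(S)$-orbits, but $k(B_0(S))$ counts $S$-characters, so each listed orbit may split into several $S$-characters, inflating the count beyond $6$). Thus $k(B_0(S))=6$ forces the configuration of cases (b)/(c), i.e.\ $p=3$, and I would then pin down $|P|=9$ by checking directly that in the families $\PSL_2(q), \PSL_3(q), \PSU_3(q)$ (with $(3,q)=1$) and in $\Alt_6=\PSL_2(3^2)$ the abelian Sylow $3$-subgroup with exactly the right number of block characters has order $9$, using the known generic formulas for $k(B_0)$ of these groups (e.g.\ for $\PSL_2(q)$ with $3 \mid q\pm 1$, $P$ is cyclic of order $3^a$ and $k(B_0)$ is computed via the cyclic-defect/Brauer-tree description).

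The step I expect to be the main obstacle is making the orbit-counting rigorous: Theorem~\ref{thm:simplegroups3} bounds the number of \emph{$\aut(S)$-orbits} on $\irr{B_0(S)}\setminus\{{\bf 1}_S\}$, whereas the hypothesis $k(B_0(S))=6$ is about the raw count of $S$-irreducible characters in the block. Bridging these requires knowing, family by family, how each $\aut(S)$-orbit decomposes into individual characters of $S$ and then verifying that the only way to land exactly at six total is through the small cases where $|P|=9$. I would handle the generic large-$q$ cases by asymptotic counting (block characters grow with $q$, so $k(B_0)>6$ for all but finitely many $q$) and then finish the finitely many remaining groups by explicit inspection of character tables or GAP data, which is precisely where the constraint $|P|=9$ emerges. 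The conclusion $p=3$ itself I expect to follow comparatively cleanly, since for $p\geq 5$ the analogue of Theorem~\ref{thm:simplegroups3} would force at least $4$ orbits hence $k(B_0)>6$, leaving $p=3$ as the only survivor.
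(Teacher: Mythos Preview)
Your proposed reduction to Theorem~\ref{thm:simplegroups3} does not work, and the gap is fundamental rather than a matter of missing details. Theorem~\ref{thm:simplegroups3} gives a \emph{lower bound} on the number of $\aut(S)$-orbits on $\irr{B_0(S)}\setminus\{{\bf 1}_S\}$. When you specialize to $A=S$ (the only sensible choice, since the hypothesis of Theorem~\ref{thm:simplegroups} concerns $S$ alone), orbits are singletons, and case~(a) merely says $k(B_0(S))\geq 5$. This is perfectly compatible with $k(B_0(S))=6$, so it rules out nothing. Your claim that ``a short analysis of orbit sizes shows the count overshoots $6$'' is the crux of the whole theorem, and you have not indicated any mechanism for it: five singleton $\aut(S)$-orbits plus the trivial character would give exactly six characters and sit squarely in case~(a). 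Likewise, your treatment of $p\geq 5$ (``the analogue of Theorem~\ref{thm:simplegroups3} would force at least $4$ orbits hence $k(B_0)>6$'') suffers from the same non sequitur, compounded by the fact that no such analogue is stated or proved in the paper.

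The paper's proof is independent of Theorem~\ref{thm:simplegroups3} and proceeds by direct counting. First, the cyclic Sylow case is dispatched by Dade's formula $k(B_0)=f+(|P|-1)/f$, which equals $6$ only for $|P|=9$. For noncyclic $P$, the bound $k(B_0)\geq 2\sqrt{p-1}$ from \cite{Hung-SchaefferFry} restricts to $p\in\{3,5,7\}$ and forces $|P|\geq 25$. One then runs through the simple groups family by family (sporadics and small cases via GAP; alternating groups via partition counts; exceptional Lie types via known block data; classical groups via counts of unipotent characters in $B_0$ using $e$-core combinatorics, supplemented by explicit semisimple characters in the few residual cases) and shows in each case that $k(B_0(S))>6$. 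The argument is a character count, not an orbit count; Theorem~\ref{thm:simplegroups3} is used elsewhere in the paper, in the reduction for non-simple $G$, where $G$-orbits on $\irr{B_0(N)}$ genuinely matter.
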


We refer the reader to \cite[Chapter~9]{nbook} for basics on the
block theory involving normal subgroups and quotient groups. Recall
that if $N$ is a normal subgroup of $G$ and $B$ and $b$ are blocks
of $G$ and $N$ respectively, then $B$ is said to \emph{cover} $b$ if there are
$\chi\in{\rm Irr}(B)$ and $\theta\in{\rm Irr}(b)$ such that $\theta$
is an irreducible constituent of the restriction $\chi_N$. It is
clear that $B_0(G)$ covers $B_0(N)$. For $\theta\in\Irr(N)$, we
write $\Irr(G | \theta)$, respectively $\Irr(B| \theta)$, for the
set of those characters of $G$, respectively $B$, containing
$\theta$ as a constituent when restricted to $N$.

\begin{lemma}\label{lem:blockabove}
Let $G$ be a finite group, $N \nor G$, and $p$ a prime. Let $B_0$ be
the principal $p$-block of $G$.
\begin{itemize}

\item[(i)]  $\irr {B_0(G/N)}\subseteq \irr{B_0}$.

\item[(ii)] If $N$ is a $p'$-group, then $\irr{B_0(G/N)}=\irr{B_0}$.

\item[(iii)] For every $\theta\in\irr{B_0(N)}$, there exists
$\chi\in\irr{B_0|\theta}$.

\item[(iv)] Suppose that $B\in {\rm Bl}(G)$ is the only block
covering $b \in {\rm Bl}(N)$. Then for every $\theta \in \irr b$, we
have $\irr{G| \theta}\sbs \irr{B}$.

\item[(v)] $\irr{B_0}\cap \irr {G/N}$ is a union of blocks of $G/N$.
\end{itemize}
\end{lemma}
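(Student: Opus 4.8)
The plan is to deduce every part from the standard machinery of blocks under normal subgroups and quotients collected in \cite[Chapter~9]{nbook}: the domination of blocks of $G/N$ by blocks of $G$, the behaviour of $\oh{p'}{G}$ on the principal block, block covering, and Brauer's third main theorem. First I would prove (i) and (v) together from domination. Identifying $\irr{G/N}$ with the characters of $G$ having $N$ in their kernel, each block $\bar B$ of $G/N$ satisfies $\irr{\bar B}\subseteq\irr B$ for a unique block $B$ of $G$ (the block dominating $\bar B$). Since $\mathbf 1_{G/N}$ inflates to $\mathbf 1_G\in\irr{B_0}$, the block dominating $B_0(G/N)$ is $B_0$, which gives (i). For (v), writing $\irr{G/N}=\bigsqcup_{\bar B}\irr{\bar B}$ over the blocks of $G/N$ and using that each $\irr{\bar B}$ lies entirely inside a single block of $G$, the intersection $\irr{B_0}\cap\irr{G/N}$ is exactly the union of those $\irr{\bar B}$ dominated by $B_0$, hence a union of blocks of $G/N$.

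For (ii), part (i) already yields $\irr{B_0(G/N)}\subseteq\irr{B_0}$. For the reverse inclusion I would use the standard fact that $\oh{p'}{G}\le\ker\chi$ for every $\chi\in\irr{B_0}$; as $N$ is a normal $p'$-subgroup, $N\le\oh{p'}{G}$, so $N\le\ker\chi$ and $\chi\in\irr{G/N}$. Thus $\irr{B_0}\subseteq\irr{G/N}$, and by (v) this set is a union of blocks of $G/N$ containing $B_0(G/N)$; since for a normal $p'$-subgroup inflation identifies $B_0(G)$ with $B_0(G/N)$ (again \cite[Chapter~9]{nbook}), equality follows.

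Part (iv) is immediate: for $\theta\in\irr b$ and $\chi\in\irr{G|\theta}$, the block $\mathrm{bl}(\chi)$ covers $\mathrm{bl}(\theta)=b$, and since $B$ is the only block of $G$ covering $b$ we get $\mathrm{bl}(\chi)=B$, i.e. $\chi\in\irr B$.

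The real work is (iii). Fix $\theta\in\irr{B_0(N)}$ and let $T=G_\theta$ be its inertia group. The Clifford correspondence $\psi\mapsto\psi^G$ gives a bijection $\irr{T|\theta}\to\irr{G|\theta}$ that is compatible with block induction, i.e. $\mathrm{bl}(\psi^G)=\mathrm{bl}(\psi)^G$. By Brauer's third main theorem $\mathrm{bl}(\psi)^G=B_0$ precisely when $\mathrm{bl}(\psi)=B_0(T)$, so it suffices to produce $\psi\in\irr{B_0(T)|\theta}$; this reduces the claim to the case where $\theta$ is $G$-invariant (replacing $G$ by $T$ and noting that membership $\theta\in\irr{B_0(N)}$ is intrinsic to $N$). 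In the invariant case $B_0(N)^G=B_0$ by Brauer's third main theorem, and I would invoke the existence of an irreducible character of $G$ over $\theta$ lying in the induced block $B_0(N)^G=B_0$. Verifying this last point---that the \emph{principal} block of $G$, rather than merely some block covering $B_0(N)$, acquires a constituent over every $\theta\in\irr{B_0(N)}$---is the main obstacle, and it is exactly where the principal-block hypothesis (through Brauer's third main theorem) is essential; the reductions above are designed to isolate it.
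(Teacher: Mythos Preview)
Your treatments of (i), (ii), (iv), and (v) are correct and agree with the paper, which simply cites the block-domination discussion on \cite[pp.~198--199]{nbook} for (i) and (v), \cite[Theorem~9.9(c)]{nbook} for (ii), and \cite[Lemma~1.2]{RSV21} for (iv).

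For (iii) you have overcomplicated matters and misidentified where the content lies. The paper just cites \cite[Theorem~9.4]{nbook}, which states: if $B$ is \emph{any} block of $G$ covering a block $b$ of $N\nor G$, then for every $\theta\in\irr{b}$ there exists $\chi\in\irr{B|\theta}$. This is a general fact about block covering, with no principal-block hypothesis; it applies here simply because $B_0(G)$ covers $B_0(N)$. Your reduction to the inertia subgroup via the Clifford correspondence and Brauer's third main theorem is therefore unnecessary, and your assertion that ``the principal-block hypothesis \dots\ is essential'' at the final step is a misconception: the ``main obstacle'' you isolate is precisely the general result \cite[Theorem~9.4]{nbook}, which the paper invokes in one line and which requires nothing special about principal blocks.
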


\begin{proof}
Part (i) and (v) follow as $B_0$ dominates $B_0(G/N)$, see the
discussion on \cite[pp. 198-199]{nbook} on block dominance and
inclusion. Part (ii) is \cite[Theorem 9.9.(c)]{nbook}. Part (iii) is
\cite[Theorem 9.4]{nbook}. Part (iv) can be found in \cite[Lemma
1.2]{RSV21}, for instance.
\end{proof}

We begin by proving Theorem \ref{thm:simplegroups3} in various cases.

\begin{lemma}\label{lem:sporalt}
Theorem \ref{thm:simplegroups3} holds if $S$  is a sporadic group,
the Tits group $\tw{2}\type{F}_4(2)'$,  a group of Lie type with
exceptional Schur multiplier, or an alternating group.
\end{lemma}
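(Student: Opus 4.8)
The plan is to verify Theorem~\ref{thm:simplegroups3} directly for each of the listed families, in each case exhibiting at least four $A$-orbits on $\irr{B_0}\setminus\{\mathbf{1}_S\}$ (so that conclusion (a) holds), except for the single exceptional group $\Alt_6=\PSL_2(3^2)$ which must land in conclusion (c). Throughout I would use the standard hypothesis that $Q\in\syl 3 S$ is abelian with $|Q|\geq 9$, which already prunes the lists considerably: for the sporadic groups and the Tits group, only those with abelian Sylow $3$-subgroups of order at least $9$ survive, and the relevant character-theoretic data for these is recorded explicitly (e.g.\ in the \textsf{GAP} or \textsf{Atlas} character-table libraries, together with the known $\Out(S)$-actions).

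For the sporadic groups and $\tw{2}\type{F}_4(2)'$, I would proceed case by case, using the known character tables together with the (small, typically order $\leq 2$) outer automorphism groups. Concretely, for each such $S$ I would list $\irr{B_0(S)}$---membership in the principal block being read off via central characters or the known block distribution---and count the orbits of $\aut(S)$ on the nontrivial characters. Since $|\Out(S)|$ is at most $2$ for every relevant sporadic group and the Tits group, each $\aut(S)$-orbit has size at most $2$, so if $k(B_0(S))\geq 9$ one immediately gets at least four nontrivial orbits; the only work is to handle the finitely many cases where $k(B_0(S))$ is smaller, which I expect to be routine table lookups.

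The genuinely structured part of the argument is the alternating groups $S=\Alt_n$. Here I would separate $\Alt_6$, which is isomorphic to $\PSL_2(9)$ and must be tracked into conclusion~(c): one checks that $k(B_0(\Alt_6))$ with the $\aut(\Alt_6)$-action (noting $\aut(\Alt_6)=\mathrm{P}\Gamma\mathrm{L}_2(9)$ contains $\mathrm{M}_{10}$) yields exactly three nontrivial orbits for the subgroups containing $\mathrm{M}_{10}$, and more for the full automorphism group, matching~(c). For the remaining $n\geq 7$ with $3$-core conditions giving abelian Sylow $3$-subgroup of order $\geq 9$ (which forces $n\in\{7,8\}$, since a $3$-Sylow of $\Sy_n$ is abelian only for $n\leq 8$ and has order $\geq 9$ exactly when $n\geq 6$), I would invoke the hook-length/$3$-core combinatorics for principal-block membership of irreducible characters of $\Sy_n$ and $\Alt_n$, combined with the fact that $|\Out(\Alt_n)|=2$ for $n\neq 6$, to produce at least four orbits.

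The main obstacle I anticipate is not any single hard computation but rather ensuring completeness: correctly identifying \emph{all} sporadic groups (and confirming the alternating range) whose Sylow $3$-subgroup is abelian of order $\geq 9$, and for each verifying the principal-block membership and the precise $\aut(S)$-orbit structure, so that no case is overlooked and the borderline count of ``exactly four'' versus ``three'' is pinned down. The delicate borderline cases are those with small $k(B_0)$ and a nontrivial diagonal or field automorphism fusing characters, where an orbit count could dip to three and would then need to be reconciled against conclusions~(b) or~(c); I would check these individually against the character-table data to confirm that only $\Alt_6$ escapes conclusion~(a).
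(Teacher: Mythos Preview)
Your approach matches the paper's: sporadic groups, the Tits group, groups with exceptional Schur multiplier, and $\Alt_6$ are dispatched by direct table lookup (the paper simply cites \textsf{GAP}'s Character Table Library), and alternating groups $\Alt_n$ with $n>6$ are handled via $3$-core combinatorics, exhibiting five characters in $B_0(\Sym_n)$ with pairwise distinct restrictions to $\Alt_n$. Your observation that the abelian-Sylow hypothesis already forces $n\in\{6,7,8\}$ is correct and actually tidier than the paper's uniform ``$m\geq 2$'' argument, which does not invoke that restriction.

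Two small points to repair. First, the groups of Lie type with exceptional Schur multiplier drop out of your detailed plan after the opening paragraph; they require the same finite \textsf{GAP} check (the list is in \cite[Table~6.1.3]{GLS}). Second, your $\Alt_6$ sentence has the orbit-count direction backward: enlarging $A$ can only \emph{merge} orbits, so the full automorphism group $\mathrm{P}\Gamma\mathrm{L}_2(9)\supseteq\mathrm{M}_{10}$ also yields exactly three nontrivial orbits, not more, and thus lands in conclusion~(c). The almost-simple groups falling under~(a) are precisely those $A$ \emph{not} containing $\mathrm{M}_{10}$, namely $\Alt_6$, $\Sym_6$, and $\PGL_2(9)$.
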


\begin{proof}
First, if $S$ is a sporadic group, the Tits group
$\tw{2}\type{F}_4(2)'$,  a group of Lie type with exceptional Schur
multiplier (see \cite[Table 6.1.3]{GLS} for a list), or $\Alt_6$, then the statement can be verified using
GAP \cite{GAP} and the information available in its Character Table
Library.  In particular, we see from this that case (c) occurs.

So, we now suppose that $S=\Alt_n$ is an alternating group with  $n>
6$. Then $A\in \{\Alt_n, \Sym_n\}$, and it suffices to show that
$\Irr(B_0(\Sym_n))$ contains at least 5 characters whose restrictions
to $\Alt_n$ are distinct. Recall that the characters of $\Sym_n$ are
in bijection with partitions of $n$, and non-conjugate partitions
yield distinct restrictions to $\Alt_n$. The characters in
$B_0(\Sym_n)$ are those with $3$-core $(r)$, where $n=3m+r$, $0\leq
r< 3$ (see, e.g. \cite[Theorem (11.1)]{Olsson93}).  Now, we see that there are at least 5 non-conjugate
partitions with $3$-core $(r)$ for $m\geq 2$, yielding the claim.
%
\end{proof}

We say that $\theta \in \irr N$, where $N \nor G$, \emph{extends} to $G$ (or is \emph{extendable} to $G$) if
there is some $\chi \in \irr G$ such that $\chi_N=\theta$. In that
case,
 $$\irr{G|\theta}=\{ \beta \chi \ | \ \beta \in \irr{G/N}\}$$
by a theorem of Gallagher \cite[Corollary 6.17]{Isaacs}.

The next observation will be useful in  some of the remaining cases.

\begin{lemma}\label{lem:2charabove}
Let $X$, $Y$, $\wt{X}$, and $E$ be finite groups such that $X\lhd
\wt{X}\lhd \wt{X}E$; $X\lhd Y\leq \wt{X}E$; and $E$ is abelian.
Suppose further that $p$ is a prime such that $[\wt{X}:X]=p$ and
$p\mid [Y:X]$.  Let $\wt\chi\in\irr{B_0(\wt{X})}$ be a character in
the principal $p$-block of $\wt{X}$ that is extendable to $\wt{X}E$
and restricts to $\chi\in\irr{X}$.  Then there exist at least two
characters in $\irr{B_0(Y)}$ lying above $\chi$.
\end{lemma}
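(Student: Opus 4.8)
The plan is to combine Gallagher's theorem and Lemma~\ref{lem:blockabove}(iii) with the elementary principle that tensoring by a linear character of $p$-power order preserves $p$-blocks. For the latter: if $\lambda\in\irr{Y}$ is linear of $p$-power order, then $\lambda(g)=1$ for every $p$-regular $g\in Y$, since $\lambda(g)$ is a root of unity whose order divides both a power of $p$ and the order of $g$ (the latter being prime to $p$); consequently $\lambda\psi$ and $\psi$ have the same central-character values on $p$-regular class sums, and hence lie in the same block of $Y$, for every $\psi\in\irr{Y}$.

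Before splitting into cases I would record four consequences of the hypotheses. Since $\wt\chi$ extends to $\wt{X}E$ it is $\wt{X}E$-invariant, hence $Y$-invariant because every element of $Y\le\wt{X}E$ normalizes $\wt{X}$; thus $\chi=\wt\chi_X$ is also $Y$-invariant. As $\wt\chi_X=\chi$, any extension of $\wt\chi$ to $\wt{X}E$ already extends $\chi$, so $\chi$ extends to $\wt{X}E$ and, on restriction, to $Y$. Next, $\chi\in\irr{B_0(X)}$, because $\chi$ is a constituent of $\wt\chi_X$ and $B_0(\wt{X})$ covers only $B_0(X)$ (see \cite[Chapter~9]{nbook}). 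Finally, since $[\wt{X}:X]=p$ is prime and $X\le Y\cap\wt{X}\le\wt{X}$, either $\wt{X}\le Y$ or $Y\cap\wt{X}=X$, and I treat these separately.

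In the case $\wt{X}\le Y$, note $\wt{X}\lhd Y$ and argue through $\wt{X}$. Let $\beta_0$ be a nontrivial character of $\irr{\wt{X}/X}$; it has order $p$, so both $\wt\chi$ and $\beta_0\wt\chi$ lie in $B_0(\wt{X})$ by the block-invariance principle, and both restrict to $\chi$ on $X$. Lemma~\ref{lem:blockabove}(iii), applied to $\wt{X}\lhd Y$, yields $\eta\in\irr{B_0(Y)}$ over $\wt\chi$ and $\eta'\in\irr{B_0(Y)}$ over $\beta_0\wt\chi$, both therefore over $\chi$. Finally $\eta\neq\eta'$: since $\wt\chi$ is $Y$-invariant, Clifford's theorem makes $\eta_{\wt{X}}$ a multiple of $\wt\chi$, so $\eta$ does not lie over $\beta_0\wt\chi\neq\wt\chi$, whereas $\eta'$ does. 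In the case $Y\cap\wt{X}=X$, the isomorphism $Y/X\cong Y\wt{X}/\wt{X}\le\wt{X}E/\wt{X}$ shows $Y/X$ is a quotient of the abelian group $E$, hence abelian, while $p\mid[Y:X]$ by hypothesis. Since $\chi$ extends to $Y$, Gallagher's theorem gives $\irr{Y|\chi}=\{\beta\chi_0:\beta\in\irr{Y/X}\}$ for a fixed extension $\chi_0$; by Lemma~\ref{lem:blockabove}(iii) applied to $X\lhd Y$ (using $\chi\in\irr{B_0(X)}$) some $\beta'\chi_0$ lies in $B_0(Y)$, so after replacing $\chi_0$ by $\beta'\chi_0$ I may assume $\chi_0\in\irr{B_0(Y)}$. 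As $Y/X$ is abelian with order divisible by $p$, there is $\beta\in\irr{Y/X}$ of order $p$, and then $\beta\chi_0\in\irr{B_0(Y)}$ by the block-invariance principle; the characters $\chi_0\neq\beta\chi_0$ both lie over $\chi$, as required.

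The genuine obstacle is the case $\wt{X}\le Y$, which is why I route it through $\wt{X}$ rather than through the cleaner ``extend-and-twist'' argument of the other case. The point is that $Y/X$ may fail to have a nontrivial linear character of $p$-power order: one can have $(Y/X)'=\wt{X}/X\cong{\sf C}_p$ with $Y/\wt{X}$ a $p'$-group, so that $p$ does not divide $|(Y/X)^{\mathrm{ab}}|$ even though $p\mid|Y/X|$. Working instead with the two distinct extensions $\wt\chi$ and $\beta_0\wt\chi$ of $\chi$ to $\wt{X}$ bypasses this, and the $Y$-invariance of $\wt\chi$ is the essential input ensuring that the two characters produced by Lemma~\ref{lem:blockabove}(iii) are genuinely distinct.
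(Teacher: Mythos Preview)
Your proof is correct and follows essentially the same strategy as the paper's: split according to whether $\wt{X}\le Y$ or $Y\cap\wt{X}=X$, produce two distinct extensions of $\chi$ to an intermediate normal subgroup, and lift each to $\irr{B_0(Y)}$ via Lemma~\ref{lem:blockabove}(iii), using the $Y$-invariance of one extension to force the two lifts to be distinct.

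The differences are cosmetic. The paper packages both cases uniformly by choosing a subgroup $Y_p$ with $X\lhd Y_p\lhd Y$, $|Y_p/X|=p$, and $Y/Y_p$ abelian (taking $Y_p=\wt{X}$ in your first case), and then argues that all $p$ extensions of $\chi$ to $Y_p$ lie in $B_0(Y_p)$ because $B_0(Y_p)$ is the \emph{unique} block of $Y_p$ covering $B_0(X)$ (Lemma~\ref{lem:blockabove}(iv)). You instead invoke the block-invariance of tensoring by a linear character of $p$-power order, and in the second case you bypass the intermediate $Y_p$ altogether by working directly with the abelian quotient $Y/X$. Your route in the second case is marginally cleaner; the paper's uniform $Y_p$ avoids repeating the Clifford/Gallagher bookkeeping. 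Neither buys anything substantive over the other.
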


\begin{proof}
Note that our assumptions imply that $Y/(\wt{X}\cap Y)$ is abelian
and $\wt{X}\cap Y\in \{\wt{X}, X\}$. Let $X\lhd Y_p\lhd Y$ such that
$Y_p=\wt{X}$ if $\wt{X}\cap Y=\wt{X}$ and such that $|Y_p/X|=p$ if
$\wt{X}\cap Y=X$. Then in either case, we have $|Y_p/X|=p$ and
$Y/Y_p$ is abelian.

Note that $\chi$ has $p$ distinct extensions $\chi_1,\ldots,\chi_p$
to $Y_p$, which all must lie in $B_0(Y_p)$ by Lemma
\ref{lem:blockabove}(iv) and the fact that $[Y_p:X]=p$, so
$B_0(Y_p)$ is the unique $p$-block of $Y_p$ above $B_0(X)$. Since
$\chi$ extends to $\wt{X}E$, and hence $Y$, at least one of these,
say $\chi_1$, extends to $Y$. In the case that $Y_p=\wt{X}$, we may
even specify $\chi_1:=\wt{\chi}$. Since $Y/Y_p$ is abelian, it
follows that every character in $\irr{Y|\chi_1}$ is an extension, by
Gallagher's theorem \cite[Corollary 6.17]{Isaacs}. In particular,
$\chi_1$ extends to some member of $B_0(Y)$ lying above $\chi$, by
Lemma \ref{lem:blockabove}(iii). But, note that by Lemma
\ref{lem:blockabove}(iii), there is also a member of
$\irr{B_0(Y)|\chi_2}$, also lying above $\chi$, but that this
character cannot lie above $\chi_1$ since $\chi_1$ and $\chi_2$ are
not $Y$-conjugate.  This yields at least two distinct members of
$\irr{B_0(Y)|\chi}$, as desired.
\end{proof}

In what follows, for $\epsilon\in\{\pm1\}$, we will use
$\PSL_n^\epsilon(q)$ to denote the group $\PSL_n(q)$ of type
$\type{A}_{n-1}$ if $\epsilon=1$ and the group $\PSU_n(q)$ of type
$\tw{2}\type{A}_{n-1}$ if $\epsilon=-1$, and we will use analogous
notation for the related groups $\SL_n^\epsilon(q)$,
$\GL_n^\epsilon(q)$, and $\PGL_n^\epsilon(q)$.

\begin{lemma}\label{lem:PSL23nondef}
Theorem \ref{thm:simplegroups3} holds when $S$ is one of the groups
$\PSL_2(q)$ or $\PSL_3^\epsilon(q)$ with $q=q_0^f$ a power of a
prime $q_0\neq 3$.
\end{lemma}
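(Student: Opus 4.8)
The plan is to use that, in each of these cases, the principal $3$-block $B_0(S)$ has cyclic defect group, so that its ordinary characters and the action on them are governed by a Brauer tree together with elementary arithmetic in $(\ZZ/3^v)^\times$. As $q_0\neq 3$ we have $3\nmid q$, and the order formulas show that the abelian Sylow $3$-subgroup $Q$ is necessarily cyclic; writing $\delta\in\{\pm1\}$ for the sign with $3\mid q-\delta$ (for $\PSL_3^\epsilon(q)$ abelianness forces $3\mid q+\epsilon$, so $\delta=-\epsilon$), we have $|Q|=3^v$ with $v=v_3(q-\delta)\geq 2$. Since the center of the relevant quasi-simple group is a $3'$-group, we may pass freely between the (special) linear group and $S$ by Lemma~\ref{lem:blockabove}(ii). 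Thus $B_0(S)$ has cyclic defect group of order $3^v$, and its inertial index $e$ divides $p-1=2$; here $e=2$, because the normalizer of the maximal torus containing $Q$ realizes the inversion automorphism of $Q$, so $\norm S Q\neq\cent S Q$ and the block is non-nilpotent. Consequently the Brauer tree of $B_0(S)$ is a path with the two rational, $\aut(S)$-invariant non-exceptional characters $1_S$ and the Steinberg character $\St$, and one exceptional vertex of multiplicity $m=(3^v-1)/2$. In particular
\[
\Irr(B_0(S))=\{1_S,\St\}\cup\{\xi_1,\dots,\xi_m\},\qquad m=\frac{3^v-1}{2},
\]
where the $\xi_j$ are the Deligne--Lusztig characters attached to the nontrivial characters of $3$-power order of the cyclic torus containing $Q$, taken modulo inversion.

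The next step is to translate the orbit count into arithmetic. Since $\St$ is $\aut(S)$-invariant, the number of $A$-orbits on $\Irr(B_0(S))\setminus\{1_S\}$ equals $1$ plus the number of $A$-orbits on $\{\xi_1,\dots,\xi_m\}$. Identifying the relevant torus character group with $\ZZ/3^v$, the $\xi_j$ correspond to $(\ZZ/3^v\setminus\{0\})/\{\pm1\}$, and $A$ acts through a subgroup $R\leq(\ZZ/3^v)^\times$ by multiplication: the Weyl-group inversion contributes $-1$, a generating field automorphism of $A$ acts as multiplication by $\pm q_0^{\,d}$ (with the unitary sign absorbed into $-1$), and any automorphism of $3'$-order acts through $\{\pm1\}$; hence $R=\langle -1,\,q_0^{\,d}\rangle$, where $\langle\phi^d\rangle$ is the field part of $A/S$. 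As $-1\in R$, the number of $A$-orbits on the $\xi_j$ equals the number of $R$-orbits on $\ZZ/3^v\setminus\{0\}$. Now the full unit group has exactly $v$ orbits there (one per $3$-adic valuation $0,1,\dots,v-1$), and any proper subgroup already has more than one orbit on the units; so $R$ has at least $v$ orbits, with equality if and only if $R=(\ZZ/3^v)^\times$.

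I would then finish by distinguishing the value of $v$. If $v\geq 3$, then $R$ has at least $v\geq 3$ orbits, giving at least $4$ orbits on $\Irr(B_0(S))\setminus\{1_S\}$, which is conclusion (a). If $v=2$, then $(\ZZ/9)^\times\cong C_6$, and $R$ either is proper, giving at least $3$ orbits on the $\xi_j$ and hence conclusion (a), or equals $(\ZZ/9)^\times$, giving exactly $2$ orbits on the $\xi_j$ and thus exactly $3$ orbits on $\Irr(B_0(S))\setminus\{1_S\}$. The latter happens precisely when $R$ contains an element of order $3$, and a lifting-the-exponent computation (using $v_3(q-\delta)=2$) shows that this is equivalent to $3\mid|A:S|$, i.e.\ to $A$ containing a field automorphism of order divisible by $3$. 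Since here $S\in\{\PSL_2(q),\PSL_3(q),\PSU_3(q)\}$ with $(3,q)=1$, this is exactly the setting of conclusion (b). To complete (b) I must check $k(B_0(A))>6$: the order-$3$ field automorphism acts nontrivially on $Q\cong C_9$, so a Sylow $3$-subgroup of $A$ is non-abelian of order at least $27$; concretely, $B_0(A)$ contains the three inflations from $\Irr(B_0(A/S))$ lying over $1_S$, the three Gallagher products $\beta\cdot\widehat{\St}$ (with $\beta\in\Irr(B_0(A/S))$) lying over $\St$ since $\St$ extends to $A$, and at least one further character lying over $\xi_1$ by Lemma~\ref{lem:blockabove}(iii), for a total of at least $7$.

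The hard part will be the second paragraph: rigorously identifying the $\aut(S)$-action on the exceptional Deligne--Lusztig characters with multiplication by $q_0^{\,d}$ on $\ZZ/3^v$ --- in particular pinning down the unitary twist --- and then carrying out the lifting-the-exponent bookkeeping that matches ``$R$ is the full unit group'' with ``$3\mid|A:S|$'' in the boundary case $v=2$.
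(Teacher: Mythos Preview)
Your reduction to cyclic defect is not valid for $\PSL_3^\epsilon(q)$. You assert that abelianness of $Q$ forces $3\mid q+\epsilon$ (i.e.\ $e=2$ and $Q$ cyclic), but this is false: when $3\|(q-\epsilon)$, the Sylow $3$-subgroup of $\PSL_3^\epsilon(q)$ is elementary abelian of order~$9$, not cyclic. Concretely, a Sylow $3$-subgroup of $\SL_3^\epsilon(q)$ in this case has order $27$, and its commutator subgroup is exactly the centre $\langle\zeta I\rangle$, so the image in $\PSL_3^\epsilon(q)$ is $C_3\times C_3$. (It is only for $9\mid q-\epsilon$ that the Sylow becomes non-abelian, so that case is vacuous; the case $3\|(q-\epsilon)$ is not.) Since $|Q|=9\geq 9$, this case falls squarely under the hypotheses of Theorem~\ref{thm:simplegroups3}, and your Brauer-tree analysis does not apply: here $B_0(S)$ contains \emph{three} unipotent characters rather than two, and the non-unipotent part is governed by $C_3\times C_3$ rather than a cyclic group. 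The paper treats this as case~(IIa), showing that one semisimple character restricts to a sum of three constituents on $S$, which are permuted by diagonal automorphisms; one then gets conclusion~(a) if $3\nmid|A:S|$ and conclusion~(b) if $3\mid|A:S|$, the latter via Lemma~\ref{lem:2charabove}. Your argument has no substitute for this.

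For the genuinely cyclic cases ($\PSL_2(q)$, and $\PSL_3^\epsilon(q)$ with $3\mid q+\epsilon$), your approach via Dade's theory and the multiplicative action on $\ZZ/3^v$ is a legitimate and rather clean alternative to the paper's explicit Deligne--Lusztig bookkeeping in cases~(Ib) and~(IIb); the orbit count on $(\ZZ/3^v)\setminus\{0\}$ is exactly the right invariant. Two details still need care there: first, the claim that diagonal (and graph) automorphisms act through $\{\pm1\}$ on the exceptional characters needs justification, not just the observation that they have $3'$-order; second, the lifting-the-exponent computation matching ``$R=(\ZZ/9)^\times$'' with ``$3\mid|A:S|$'' must be carried out separately for $\delta=1$ and $\delta=-1$ and for $3\mid q_0-1$ versus $3\mid q_0+1$. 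Your $k(B_0(A))>6$ count is fine once you cite that $\St$ extends to $\aut(S)$ and invoke \cite[Lemma~2.3]{GRSS20} to place the Gallagher products $\beta\cdot\widehat{\St}$ with $\beta\in\Irr(B_0(A/S))$ inside $B_0(A)$.
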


\begin{proof}
Let $S=\PSL_n^\epsilon(q)$ with $n\in\{2,3\}$ and $q=q_0^f$ with
$q_0\neq 3$ a prime. Write \[G:=\SL_n^\epsilon(q),
\wt{S}:=\PGL_n^\epsilon(q), \text{ and } \wt{G}:=\GL_n^\epsilon(q)\]
for the appropriate choice of $n, \epsilon$. In this case, the dual group $\wt{G}^\ast$ is isomorphic to $\wt{G}$, and we identify the two groups.
Note that
$\aut(S)=\wt{S}\rtimes D$, where $D$ is an appropriate group
generated by field and graph automorphisms. (See e.g. \cite[Theorem
2.5.12]{GLS}.) In this case, $D$ is further abelian. Let
$e\in\{1,2\}$ be the order of $\epsilon q$ modulo $3$.  The
unipotent characters of $\wt{G}$ (or $\wt{S}$, $S$) are in bijection
with partitions of $n$, and by \cite{FS}, two such characters lie in
the same block if and only if they correspond to partitions with the
same $e$-core. In the case $e=1$ and $S=\PSL_3^\epsilon(q)$, we see
from this that there are two nontrivial unipotent characters in
$B_0(S)$, and these are $\aut(S)$-invariant (see e.g. \cite[Theorem
2.5]{Malle08}). In the remaining cases, there is one nontrivial
unipotent character in $B_0(S)$ (namely, the Steinberg character
$\mathbf{St}_S$), which is again $\aut(S)$-invariant.

\medskip

(I) Suppose first that $|Q|>9$.

\medskip

(Ia) If $S=\PSL_3^\epsilon(q)$ with $e=1$, then this means that
$9\mid (q-\epsilon)$ and all three unipotent characters lie in the
principal block.  Let $a_1, a_2\in C_{q-\epsilon}\leq
\FF_{q^2}^\times$ such that $|a_1|=3$ and $|a_2|=9$.  Then for
$i=1,2$, let \[s_i:=\diag(a_i, a_i^{-1}, 1)\in \wt{G}.\]   Then each
$s_i$ defines a semisimple character $\chi_{s_i}$ of $\wt{G}$ that
lies in $B_0(\wt{G})$ using \cite[Theorem 9.12]{CE04}, is trivial on
$\zent{\wt{G}}$ since $s_i\in [\wt{G}, \wt{G}]=G$ (see e.g.
\cite[Prop. 2.7]{SFT21}), and such that $s_1^\alpha z$ is not
$\wt{G}$-conjugate to $s_2$ for any $z\in\zent{\wt{G}}$ and
$\alpha\in D$ (since semisimple classes in $\wt{G}$ are determined
by their eigenvalues and the eigenvalues of $s_1^\alpha$ still have
order 3). Further, there is an isomorphism $z\mapsto \hat z$ between
$\zent{\wt{G}}$ and $\irr{\wt{G}/G}$, such that
$\chi_{sz}=\chi_{s}\hat{z}$ in this situation for $s\in \wt{G}$
semisimple. (See \cite[(8.19) and Proposition 8.26]{CE04}).  Then
since $\chi_{s_1}^\alpha=\chi_{s_1^\alpha}$ by \cite[Corollary
2.5]{NTT08},
we see $\chi_{s_1}^\alpha$ and $\chi_{s_2}$ must
necessarily have distinct restrictions to $S=G/\zent{G}\cong
G\zent{\wt{G}}/\zent{\wt{G}}$, and we have obtained at least two
additional $\aut(S)$-orbits in $\Irr(S)$ by restriction.

\medskip

(Ib) Now let $S=\PSL_3^\epsilon(q)$ with $e=2$ or $S=\PSL_2(q)$, so
that $Q$ is cyclic and the condition $|Q|>9$ means $27\mid (q^2-1)$.
Let $a_1, a_2, a_3\in \FF_{q^2}^\times$ with orders $|a_i|=3^i$ for
$i=1,2,3$. Then considering $s_i\in\wt{G}$ whose nontrivial
eigenvalues are $\{a_i, a_i^{-1}\}$, we again obtain
$\chi_{s_i}\in\irr{B_0(\wt{G})}$, trivial on $\zent{\wt{G}}$, and
hence $\chi_{s_i}$ may be identified with a character in
$\irr{B_0(\wt{S})}$.  Suppose now for a contradiction that $\chi_{s_i}^\alpha \hat{z}$ restricts to the same character of $S$ as $\chi_{s_j}$ for $i\neq j\in\{1,2,3\}$, some $\alpha\in D$, and
$ z\in\zent{\wt{G}}$.  This means that $s_i^\alpha z$ is
$\wt{G}$-conjugate to $s_j$ and that the character $\hat{z}$ must be trivial on $\zent{\wt{G}}$. Then we see that the corresponding $z$ (and hence $\hat{z}$) must have order
a nontrivial power of $3$, contradicting that $3$ does not divide
$|\wt{S}/S|$ in the cases being considered.  This yields our
 additional 3 $\aut(S)$-orbits in this
case.

\medskip

(II) Finally, assume that we are in the last situation:
$S=\PSL_2(q)$ or $\PSL_3^\epsilon(q)$ and  $|Q|=9$, and further assume that $A$ defines
fewer than 4 orbits on $\irr{B_0(S)}\setminus {\bf 1}_S$.

\medskip

(IIa) First, if $e=1$ and $S=\PSL_3^\epsilon(q)$, this means that
$3\mid\mid(q-\epsilon)$.
In this case, we still have two nontrivial, $\aut(S)$-invariant,
unipotent characters in $\irr{B_0}$.  Taking $s_1=\mathrm{diag}(a_1,
a_1^{-1}, 1)$ as before with $|a_1|=3$, the corresponding semisimple
character $\chi_{s_1}$ of $\wt{G}$ is trivial on $\zent{\wt{G}}$,
lies in $B_0(\wt{S})$, and restricts to the sum of three characters
in $B_0(S)$. So, if $S\lhd A\leq \aut(S)$ with $3\nmid [A:S]$, these
characters must also be invariant under $A$, giving more than 4
$A$-orbits on $\Irr(B_0(S))$.  So, assume that $3\mid [A:S]$.  By
\cite[Theorems 2.4 and 2.5]{Malle08}, the unipotent characters
extend to $\aut(S)$. Now, by applying Lemma \ref{lem:2charabove}
with $(X, \wt{X}, Y, E)=(S, \wt{S}, A, D)$ to each unipotent
character, we obtain
at least 6 characters just from those above the three unipotent
characters, and hence more than 6 in total. Hence we are in the situation of (b).

\medskip

(IIb) We are left with the case that  $9\mid\mid(q^2-1)$, so that
$9\mid\mid(q-\eta)$ for some $\eta\in\{\pm1\}$ and $Q$ is cyclic of
size $9$. (In particular, we have $e=2$ and $\eta=-\epsilon$ in case
$S=\PSL_3^\epsilon(q)$.) Here the only nontrivial unipotent
character in $B_0({S})$ is the Steinberg character $\mathbf{St}$, and there is a
unique unipotent block of $\wt{G}$ with positive defect.   Let $a_1,
a_2\in\FF_{q^2}^\times$ and $s_1, s_2\in\wt{G}$ be defined
exactly as in the case (Ib) above. Then $\chi_{s_i}\in
\Irr(B_0(\wt{S}))$ and $\chi_{s_1}$ and $\chi_{s_2}$ lie in distinct
$\aut(S)$-orbits, as before. Note that each $\chi_{s_i}$ is
irreducible on $S$ using the same arguments as before, and that
$\chi_{s_1}$ is further $D$-invariant, since any element of $D$
either inverts or stabilizes the eigenvalues of order $3$. From the
restrictions of $\chi_{s_1}$ and $\chi_{s_2}$ to $S$, in addition to $\mathbf{St}$, this yields three $\aut(S)$-orbits on $\irr{B_0(S)}\setminus \{
{\bf 1}_S \}$. Then since we have assumed we do not have four
$A$-orbits on this set, the remaining characters in $\irr{B_0(S)}$
must be $A$-conjugate to the restriction of $\chi_{s_2}$ to $S$.
But note this means the three choices of pairs $\{a_2, a_2^{-1}\}$
with $|a_2|=9$ must yield $A$-conjugate characters, say $\chi_{s_2},
\chi_{s_2'}, \chi_{s_2''}$, and hence $3$ divides $ |A/S|$.

Hence, we see that if $A$ is an almost simple group with socle $S$
permitting only 3 orbits on $\irr{B_0}\setminus \{ {\bf 1}_S \}$,
then $3\mid |A/S|$ and $A/(\wt{S}\cap A)$ is abelian. Another
application of Lemma \ref{lem:2charabove} applied to $\mathbf{1}_S$,
$\mathbf{St}$, and $\chi_{s_1}$ now forces at least 6 characters in
$\irr{B_0(A)}$, along with at least one more above $\chi_{s_2}$.
\end{proof}

\begin{corollary}\label{cor:PSL23}
Theorem \ref{thm:simplegroups3} holds when $S$ is one of the groups
$\PSL_2(q)$ or $\PSL_3^\epsilon(q)$ with $q=q_0^f$ a power of a
prime $q_0$.
\end{corollary}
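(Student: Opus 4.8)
The plan is to treat the defining-characteristic case left open by Lemma~\ref{lem:PSL23nondef}. By that lemma we may assume $q_0=3$, so that $q=3^f$ and $p=3$ is the defining characteristic. If $S=\PSL_3^\epsilon(3^f)$, then a Sylow $3$-subgroup of $S$ is the image of the group of upper unitriangular matrices in $\SL_3^\epsilon(3^f)$, which is nonabelian of order $q^3$; this contradicts the standing hypothesis that $Q$ is abelian, so this family does not occur. We are thus reduced to $S=\PSL_2(3^f)$, whose Sylow $3$-subgroup $Q\cong(\FF_q,+)$ is elementary abelian of order $q=3^f$, and the assumption $|Q|\geq 9$ forces $f\geq 2$.

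Next I would identify $\irr{B_0}$ in this situation. Every nontrivial irreducible character of $\PSL_2(q)$ has degree $q$, $q\pm1$, or $(q\pm1)/2$, and only the Steinberg character $\mathbf{St}$, of degree $q=|S|_3$, has degree divisible by $3$; hence $\mathbf{St}$ is the unique character of $3$-defect zero, and all remaining characters have full defect. Since the Sylow normalizer is the Borel subgroup $B=Q\rtimes\bar T$ with $\bar T$ acting fixed-point-freely on $Q\setminus\{1\}$ (so that $\cent B Q=Q$ is self-centralizing), $B$ has a unique $3$-block; by Brauer's First Main Theorem $S$ therefore has a unique block of full defect, namely $B_0$. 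Consequently $\irr{B_0}=\irr S\setminus\{\mathbf{St}\}$ and $k(B_0)=(q+5)/2-1=(q+3)/2$. (This block distribution in defining characteristic is in any case classical.)

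If $f=2$ then $S=\PSL_2(9)\cong\Alt_6$, which is already settled in Lemma~\ref{lem:sporalt}, landing in case~(c). So I would now assume $f\geq 3$. The set $\irr{B_0}\setminus\{\mathbf 1_S\}$ contains characters of the three distinct degrees $q+1$, $q-1$, and $(q-\eta)/2$ (with $\eta=\pm1$ according to $q\bmod 4$), each nonempty for $f\geq 3$; since $\aut(S)$ preserves degrees, this already yields at least three orbits. To produce a fourth I would show that the principal-series characters of degree $q+1$ split into at least two $\aut(S)$-orbits. Each such character extends to $\PGL_2(q)$, its torus parameter being fixed by the diagonal automorphism, so the diagonal automorphism fixes it and its $\aut(S)$-orbit equals its orbit under the group of field automorphisms, which is cyclic of order $f$. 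As there are $(q-3)/4$ (respectively $(q-5)/4$) such characters and this number exceeds $f$ once $f\geq 3$, no single field-automorphism orbit contains all of them, giving at least two orbits of degree $q+1$. Hence $\aut(S)$ has at least $2+1+1=4$ orbits on $\irr{B_0}\setminus\{\mathbf 1_S\}$, placing us in case~(a).

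The main obstacle is the fusion analysis of the last paragraph. One must be certain both that the degree-$(q+1)$ characters lie in $B_0$ (which is exactly the defect-zero identification of the second paragraph) and that the diagonal automorphism genuinely fixes them, so that only field automorphisms can fuse them into orbits of size at most $f$; the count $(q-3)/4>f$ then forces a second orbit of that degree. The secondary point requiring care is the identification $\irr{B_0}=\irr S\setminus\{\mathbf{St}\}$ in defining characteristic, for which I would either invoke the self-centralizing normal Sylow subgroup of the Borel together with Brauer's First Main Theorem, or simply cite the classical description of the blocks of $\PSL_2(q)$.
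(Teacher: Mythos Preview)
Your argument is correct and follows the same overall architecture as the paper: reduce via Lemma~\ref{lem:PSL23nondef} to $q_0=3$, rule out $\PSL_3^\epsilon(3^f)$ by nonabelian Sylow, handle $f=2$ via Lemma~\ref{lem:sporalt}, identify $\irr{B_0}=\irr S\setminus\{\mathbf{St}\}$, observe three distinct nontrivial degrees, and then split one degree class into at least two $\aut(S)$-orbits. The only genuine difference is in this last step. The paper works in whichever torus $C_{q-\eta}$ has order $4m$ with $m\geq 7$ and explicitly picks two elements $\zeta_1,\zeta_2$ whose orders satisfy $|\zeta_1|\notin\{|\zeta_2|,2|\zeta_2|\}$; the corresponding semisimple characters $\chi_{s_1},\chi_{s_2}$ are then shown to be non-conjugate under $\aut(S)$ by an eigenvalue-order argument. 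You instead fix the degree-$(q+1)$ class, note that each such character extends to $\PGL_2(q)$ (hence is fixed by the diagonal automorphism), and use pigeonhole: since the $(q-3)/4$ or $(q-5)/4$ characters of that degree outnumber the maximal field-automorphism orbit size $f$ once $f\geq 3$, there must be at least two orbits. Your route is arguably more elementary (no explicit element choice, no case split on $\eta$), while the paper's construction has the advantage of naming the two witnesses directly and fitting the Deligne--Lusztig framework used elsewhere in the section.
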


\begin{proof}
From Lemma \ref{lem:PSL23nondef}, we may assume that $q_0=3$.  Then
the condition that $Q$ is abelian and $|Q|\geq 9$ means that
$S=\PSL_2(3^f)$ with $f\geq 2$. (See, e.g.
\cite[Theorem]{shenzhau}.) Since the case $f=2$ is  covered by Lemma
\ref{lem:sporalt}, we assume that $f>2$.

Here $B_0(S)$ contains all irreducible characters, aside from the
Steinberg character (see \cite[Theorems 1.18 and 3.3]{Cabanes18}).
Then we see from the well-known character table for $S$ that there
are three distinct character degrees in $\irr{B_0(S)}\setminus\{{\bf
1}_S\}$, and it suffices to show that there are two semisimple
characters of the same degree that are not $\aut(S)$-conjugate.  We
will employ similar strategies to the second paragraph of the proof
of Lemma \ref{lem:PSL23nondef}.

Note that since $q=3^f\geq 27$, at least one of $q-\eta$ for
$\eta\in\{\pm1\}$  is a composite number of the form $4m$ with
$m\geq 7$. Then $C_{q-\eta}\leq\FF_{q^2}^\times$ contains two
elements $\zeta_1, \zeta_2$ of order larger than 4 and satisfying
$|\zeta_1|\not\in\{ |\zeta_2|, 2|\zeta_2|\}$. For $i=1,2$, let $s_i$ be a semisimple element of $\GL_2(q)$ with eigenvalues $\{\zeta_i,
\zeta_i^{-1}\}$. Then the semisimple
characters $\chi_{s_i}$ of $\GL_2(q)$ corresponding to the
 $s_i$   will restrict irreducibly to $\SL_2(q)$ (since
$s_iz$ cannot be conjugate to
$s_i$ for any $1\neq z\in \zent{\GL_2(q)}$) and be trivial on the
center (since $s_i\in \SL_2(q)=[\GL_2(q), \GL_2(q)]$). Further, the
restrictions of $\chi_{s_1}$ and $\chi_{s_2}$ to $\PSL_2(q)$ cannot
be conjugate under field automorphisms since
$\chi_{s_1}^\alpha=\chi_{s_1^{\alpha}}$ for $\alpha\in D$, where we write $\aut(S)=\wt{S}\rtimes D$ as in the proof of Lemma \ref{lem:PSL23nondef}, using
\cite[Corollary 2.5]{NTT08}, and ${s_1}^{\alpha}$ cannot be
conjugate to $s_2z$ for $z\in \zent{\GL_2(q)}$ of order dividing
$2$. (Recall that the $z\in\zent{\GL_2(q)}$ are in bijection with
$\hat{z}\in \Irr(\GL_2(q)/\SL_2(q))$, and that
$\chi_{s_2z}=\chi_{s_2}\hat{z}$. Further, if $|z|>2$, then $\hat{z}$
is
not trivial on $\zent{\GL_2(q)}$.)
This shows that the restrictions of $\chi_{s_1}$ and $\chi_{s_2}$ to $S$ are not $\aut(S)$-conjugate, as desired.
\end{proof}

We now complete the proof of Theorem \ref{thm:simplegroups3}, which
essentially follows from the observations in \cite[Section
3]{RSV21}.

\begin{proof}[Proof of Theorem \ref{thm:simplegroups3}]
From Lemma \ref{lem:sporalt} and Corollary \ref{cor:PSL23}, we may
assume that $S$ is a simple group of Lie type defined over $\FF_q$
with nonexceptional Schur multiplier, where $q$ is a power $q=q_0^f$
of a prime $q_0$ and that $S \not\in\{\PSL_2(q),
\PSL_3^\epsilon(q)\}$.  Further, $q_0\neq 3$, as otherwise $Q$ is
not abelian by \cite[Theorem]{shenzhau}, since $S\neq \PSL_2(q)$.

Assume first that $S$ is of exceptional type, including
$\tw{2}\type{F}_4(q)$ and $\tw{3}\type{D}_4(q)$.  Then the proof of
\cite[Lemma 3.7]{RSV21} yields more than 4 orbits under $\aut(S)$ in
$\irr{B_0(S)}$, and we are done in this case. (Note that we do not
need to consider the cases  $\tw{2}\type{G}_2(q)$ and
$\tw{2}\type{B}_2(q)$, since we would have $q_0=3$ in the first case and $3$
does not divide $|S|$ in the second.)

We are left with the case that $S$ is of classical type
$\type{A}_{n-1}$ or  $\tw{2}\type{A}_{n-1}$ with $n\geq 4$,
$\type{B}_n$ with $n\geq 3$, $\type{C}_n$ with $n\geq 2$, or
$\type{D}_n$ or $\tw{2}\type{D}_n$ with $n\geq 4$. Here, the proof
of \cite[Lemma 3.8]{RSV21} shows that we can assume $me\leq 4$ when
$S=\PSL_n^\epsilon(q)$, where $e\in\{1,2\}$ is the order of
$\epsilon q$ modulo 3 and $n=me+r$ with $0\leq r<e$.  Similarly, the
proof of \cite[Lemma 3.10]{RSV21} shows that there are at least 5
$\aut(S)$-orbits in $B_0(S)$ in the remaining classical cases,
except possibly if $S= \type{C}_2(q)$.  (Note that since $p=3$, we
have $e=1$ and $n=m$ for these remaining classical cases in the
notation of loc. cit.) If $S=\type{C}_2(q)$, then there is at worst
one pair of unipotent characters in $B_0(S)$ that are interchanged
by elements of $\aut(S)$ (see \cite[Theorem 2.5]{Malle08}). Then the
same arguments as in \cite[Lemma 3.10]{RSV21} again finish this
case, since $k(2,2)=5$ (in the notation of loc. cit).

Finally, we assume $S=\PSL_4^\epsilon(q)$ or $\PSL_5^\epsilon(q)$
with $em=4$. Recall that by \cite{FS}, unipotent characters lie in
the same block if and only if they correspond to partitions with the
same $e$-core. We see that there are 5 partitions of $4$ with
trivial $e$-core and similarly 5 partitions of $5$ with $e$-core
(1), and hence there exist 5 unipotent characters in $B_0(S)$, which
are $\aut(S)$-invariant by \cite[Theorem 2.5]{Malle08}.
\end{proof}

We are now ready to prove Theorem \ref{thm:simplegroups}.

\begin{proof}[Proof of Theorem \ref{thm:simplegroups}]
First, assume $P$ is cyclic. Then by Dade's cyclic-defect theory \cite[Theorem 1]{Dade66},
we have $k(B_0)=f+\frac{|P|-1}{f}$, where
$f:=[\norm{S}{P}:\cent{S}{P}]$. This forces $|P|=9$ when $k(B_0)=6$.

Hence we assume that $P$ is non-cyclic, and for contradiction, we
assume that $k(B_0)=6$ and $|P|\neq 9$. By \cite[Theorem
1.1]{Hung-SchaefferFry}, we know that $k(B_0)\geq 2\sqrt{p-1}$, and
hence we may further assume that $p\in\{3,5,7\}$, so that $|P|\geq
25$.

As before, the cases of sporadic groups, the Tits group,
$\tw{2}\type{G}_2(3)'$, and groups of Lie type with exceptional
Schur multiplier can be seen using GAP.

Next suppose $S=\Alt_n$ is a simple alternating group and let
$n=pm+r$ with $m, r$ integers satisfying  $m\geq 1$ and $0\leq r<p$.
Then \[k(B_0)\geq
\frac{1}{2}k(B_0(\Sym_n))=\frac{1}{2}k(B_0(\Sym_{pm}))\] where the
equality comes from  \cite[Theorem (1.10)]{MO83}.  Note that our
assumption $|P|\geq 25$ forces $m\geq 2$ if $p\geq 5$ and $m\geq 3$
if $p=3$.  In these cases, we can explicitly find at least 13
partitions of $pm$ with trivial $p$-core, so that
$k(B_0(\Sym_{pm}))\geq 13$ and $k(B_0)>6$.

Finally, let $S$ be a simple group of Lie type defined over
$\mathbb{F}_q$ with $q$ a power of some prime.  If $p\mid q$, then
the condition $P$ is abelian again leaves only $S=\PSL_2(q)$, and
the condition  $|P|\geq 25$ forces $q\geq 25$. Now, by \cite[Theorem
3.3]{Cabanes18}, we have $k(B_0)=k(S)-1>6$, using the well-known
character table for $S$.

Hence we assume $S$ is a simple group of Lie type defined over
$\mathbb{F}_q$ with $p\nmid q$.  We remark that our proof in this
remaining case follows  the work in \cite{RSV21} closely, although
we need to exhibit one more character here than we needed there.

Assume first that $S$ is of exceptional type. Note that
$\tw{2}\type{B}_2(q)$ and $\tw{2}\type{G}_2(q)$ have cyclic Sylow
$p$-subgroups,  $B_0(\tw{2}\type{F}_4(q))$ has more than 6
characters, using \cite{malle91} and $B_0(\tw{3}\type{D}_4(q))$ has
more than 6 characters using \cite{DeM87}, so we further assume that
$S$ is not of Suzuki, Ree, or triality type. If the order $d_p(q)$
of $q$ modulo $p$ is not a so-called regular number, then (with our
previous assumptions) we see from \cite[Table 2]{Broue-Malle-Michel}
that $B_0(S)$ contains more than 6 characters. So, assume that
$d_p(q)$ is regular, so that every $p'$-degree unipotent character
lies in $B_0(S)$ (see e.g. \cite[Lemma 3.6]{RSV21}). For
$S=\type{G}_2(q)$, we see using \cite{shamash} that $B_0(S)$
contains more than 6 characters.  In the remaining exceptional
groups $\type{F}_4(q)$, $\type{E}_6^\pm(q)$, $\type{E}_7(q)$, and
$\type{E}_8(q)$, we see using \cite[Section 13.9]{Carter85} that
there are more than 6 $p'$-degree unipotent characters when $P$ is
noncyclic and abelian and $d_p(q)$ is regular, so we are done with
the exceptional groups.

We are left with the case that $S$ is a finite classical group. If
$S$ is $\PSL_n^\epsilon(q)$ with $\epsilon\in\{\pm\}$ and $n\geq 2$,
let $e$ be $d_p(\epsilon q)$.  If $S$ is $\POmega_{2n}^\epsilon(q)$
with $n\geq 4$,  $\POmega_{2n+1}(q)$ with $n\geq 3$, or
$\PSp_{2n}(q)$ with $n\geq 2$, let $e$ be $d_p(q^2)$. Arguing as in
\cite[Sections 6 and 7]{Hung-SchaefferFry}, we have the number of
unipotent characters in $B_0(S)$ is at least $k(e,m)$, $k(2e,m)/2$,
$k(2e,m)$, and $k(2e,m)$, in the cases if $S=\PSL_{n}^\epsilon(q)$,
$\POmega_{2n}^\epsilon(q)$,  $\POmega_{2n+1}(q)$, or $\PSp_{2n}(q)$,
respectively. Here $m$ is such that $n=em+r$ with $0\leq r<e$ and
$k(s,t)$ can be computed as in \cite[Lemma 1]{Olsson84}. Further,
our assumption that $P$ is abelian but not cyclic forces $p\geq
m>1$. With this and our assumption that $P$ is not cyclic, we have
the number of unipotent characters in $B_0(S)$ is at least 7 unless
possibly if $(e,m)\in\{(1,3), (1,4), (2,2)\}$ if
$S=\PSL_n^\epsilon(q)$ or $(e,m)\in\{(1,2), (1,3)\}$ otherwise. In
the latter cases, since $e=1$, we have $n=m$, so that
$S=\PSp_{2n}(q)$ or $\POmega_{2n+1}(q)$, and the case $(e,m)=(1,3)$
gives 10 unipotent characters in $B_0(S)$. Then we may assume
$S=\PSL_n^\epsilon(q)$ with $(e,m)\in\{(1,3), (1,4), (2,2)\}$ or
$S=\PSp_4(q)$ with $(e,m)=(1,2)$.  Let $G=\SL_n^\epsilon(q)$ and
$\wt{G}=\GL_n^\epsilon(q)$ in the first cases and
$G=\wt{G}=\Sp_{4}(q)$ in the latter.

First let $(e,m)\neq (1,3)$, so that there are at least 5 unipotent
characters in $B_0(S)$. We may consider two semisimple characters
$s_1, s_2$ of $\wt{G}$ corresponding to semisimple elements of
$\wt{G}^\ast\in\{\GL_n^\epsilon(q), \SO_5(q)\}$ with nontrivial
eigenvalues $\{a, a^{-1}, b, b^{-1}\}$, where $a,b\in
C_{(q^2-1)_p}\leq \FF_{q^2}^\times$ satisfy $a\neq b$ in the case of
$s_1$ and $a=b$ in the case of $s_2$. These necessarily give
distinct characters on restriction to $G$ since $s_1z$ is not
conjugate to $s_2$ for any $1\neq z\in\zent{\wt{G}^\ast}$ and are
trivial on $\zent{G}$, as they lie in $[\wt{G}^\ast, \wt{G}^\ast]$,
which yields two more members of $\irr{B_0(S)}$.

Finally, let $(e,m)=(1,3)$, so we have $e=1$, $p>3$, and
$S=\PSL_3^\epsilon(q)$. Here $B_0(S)$ contains all 3 unipotent
characters and therefore $B_0(G)$ contains all characters lying in
Lusztig series indexed by semisimple elements $s\in G^\ast$ with
$|s|$ a power of $p$ (see \cite[Theorem 9.12]{CE04}). We may obtain
three semisimple elements $s_1, s_2, s_3$ with eigenvalues $\{a, a,
a^{-2}\}$, $\{a,b,(ab)^{-1}\}$, and $\{a,a^{-1},1\}$ with $a,b\in C_{q-\epsilon}$;
$|a|=p=|b|$; and $a\neq b$, whose Lusztig series give distinct
restrictions to $G$ and are trivial on $\zent{G}$ arguing as before.
Further, the series $\mathcal{E}(\wt{G}, s_1)$ contains two characters, corresponding to the two unipotent characters of $\cent{\wt{G}}{s_1}\cong \GL_2^\epsilon(q)\times \GL_1^\epsilon(q)$.
This yields at least 4 additional characters in $B_0(S)$, and we are
done.
%
\end{proof}


\section{Principal blocks with six irreducible
characters}\label{sec:3}

The purpose of this section is to prove Theorems \ref{thm:kB0=6} and
\ref{thm:kB0=6-main}.

\subsection{Preliminaries} When $p$ is a prime, we
write $k_p$ for the $p$-part of an integer $k$. Also, $l(B)$ denotes
the number of irreducible Brauer characters in a block $B$.

\begin{lemma}\label{lem:brauercharacters}
Let $G$ be a finite group and $p$ a prime. Let $B_0$ denote the
principal $p$-block of $G$.
\begin{itemize}
\item[(i)]  $k(B_0)\geq|\zent G|_p l(B_0)$.
\item[(ii)]  $ l(B_0)=1$ if, and only if, $G$ has a normal $p$-complement.
\end{itemize}
\end{lemma}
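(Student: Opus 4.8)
The plan is to handle the two parts separately, with part (i) carrying the essential content and part (ii) being mostly standard.

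For (i), set $Z:=\oh p{\zent G}$, the Sylow $p$-subgroup of $\zent G$, so that $|Z|=|\zent G|_p$ and $Z$ is a central $p$-subgroup of $G$. Since $Z$ is central, each $\chi\in\Irr(G)$ restricts to $Z$ as $\chi(1)\lambda_\chi$ for a unique linear character $\lambda_\chi\in\Irr(Z)$, and this gives a partition $\Irr(B_0)=\bigsqcup_{\lambda\in\Irr(Z)}\Irr(B_0\mid\lambda)$ into the sets of characters of $B_0$ lying over each $\lambda$. Hence $k(B_0)=\sum_{\lambda\in\Irr(Z)}|\Irr(B_0\mid\lambda)|$, and it suffices to prove the \emph{fiberwise} bound $|\Irr(B_0\mid\lambda)|\geq l(B_0)$ for every one of the $|Z|$ characters $\lambda$; summing then yields the claim (and in particular shows every fiber is nonempty).

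To prove the fiberwise bound I would show that the submatrix $D_\lambda$ of the decomposition matrix of $B_0$, with rows indexed by $\Irr(B_0\mid\lambda)$ and columns by $\IBr(B_0)$, has full column rank $l(B_0)$, so that it has at least $l(B_0)$ rows. Suppose not: then some nonzero rational vector $(v_\varphi)_{\varphi\in\IBr(B_0)}$ is orthogonal to every row, and I form the virtual projective character $\Psi:=\sum_\varphi v_\varphi\Phi_\varphi$, where $\Phi_\varphi=\sum_\chi d_{\chi\varphi}\chi$ is the projective indecomposable character attached to $\varphi$. Orthogonality says precisely that $\langle\Psi,\chi\rangle=0$ for all $\chi\in\Irr(B_0\mid\lambda)$, i.e. the $\lambda$-component $\Psi^{(\lambda)}:=\sum_{\chi\in\Irr(B_0\mid\lambda)}\langle\Psi,\chi\rangle\chi$ vanishes. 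On the other hand one extracts this component by $Z$-averaging: using $\chi(zg)=\lambda_\chi(z)\chi(g)$ for central $z$ and orthogonality of the characters of $Z$, one gets $\Psi^{(\lambda)}(g)=\frac1{|Z|}\sum_{z\in Z}\overline{\lambda(z)}\,\Psi(zg)$. Since $\Psi$ is a (virtual) projective character it vanishes on all $p$-singular elements, and for $z\in Z\setminus\{1\}$ the element $zg$ is $p$-singular whenever $g$ is $p$-regular, so only the term $z=1$ survives and $\Psi^{(\lambda)}(g)=\frac1{|Z|}\Psi(g)$ on $p$-regular $g$. Combining the two computations forces $\Psi$ to vanish on all $p$-regular elements, hence everywhere, so $\Psi=0$ and $(v_\varphi)=0$ by linear independence of the $\Phi_\varphi$, a contradiction. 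This is the step I expect to require the most care, and it is what genuinely links $|\zent G|_p$, $k(B_0)$, and $l(B_0)$.

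For (ii), the implication that a normal $p$-complement forces $l(B_0)=1$ is routine: if $K=\oh{p'}{G}$ then $B_0(G)$ covers only the principal block $B_0(K)$, which for the $p'$-group $K$ consists of the trivial module alone, so by Clifford theory every simple module in $B_0$ has $K$ in its kernel and is therefore a module for the $p$-group $G/K$; a $p$-group has a unique simple module, whence $l(B_0)=1$. The reverse implication, that $l(B_0)=1$ forces $G$ to be $p$-nilpotent, is the classical block-theoretic criterion for $p$-nilpotency, which I would invoke from the standard literature \cite{nbook} rather than reprove. Thus the main obstacle of the whole lemma is the fiberwise rank statement in (i): everything else is bookkeeping or citation, whereas that argument hinges on the vanishing of projective characters on $p$-singular elements together with the centrality of $Z$.
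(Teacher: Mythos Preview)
Your proof is correct. The paper does not actually prove this lemma: it simply cites \cite[Theorem~5.12]{nbook} for (i) and \cite[Corollary~6.13]{nbook} for (ii). What you have written is essentially a self-contained reconstruction of those cited results---your fiberwise rank argument via the vanishing of virtual projective characters on $p$-singular elements is the standard proof behind the first citation, and your treatment of (ii) unpacks the second. So there is no genuine divergence in approach; you have supplied the details the paper chose to outsource.
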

\begin{proof}
Part (i) is a direct consequence of \cite[Theorem 5.12]{nbook}. Part
(ii) is \cite[Corollary 6.13]{nbook}.
\end{proof}

\begin{lemma}\label{onlycovering}
Let $M \nor G$ and $P \in \syl p G$. If $P \cent G P \sbs M$, then
$B_0(G)$ is the only block of $G$ covering $B_0(M)$. In particular,
$k(G/M)<k_0(B_0(G))$ as long as $P>1$.
\end{lemma}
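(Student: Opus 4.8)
The plan is to establish the covering statement first and then read off the character inequality from it using Clifford theory. Note that $P\cent{G}{P}\sbs M$ forces $P\sbs M$, so $P\in\Syl_p(M)$, $P$ is a defect group of $B_0(M)$, and $G/M$ is a $p'$-group; also $B_0(G)$ always covers $B_0(M)$. For the uniqueness, I would take an arbitrary block $B$ of $G$ covering $B_0(M)$ and first pin down its defect. By Kn\"orr's theorem on defect groups of covered blocks, $B$ has a defect group $Q$ with $Q\cap M$ a defect group of $B_0(M)$, i.e. a Sylow $p$-subgroup of $M$; since such a subgroup is already a full Sylow $p$-subgroup of $G$ and $Q$ is a $p$-group containing it, we get $Q=P$. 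Hence every block of $G$ covering $B_0(M)$ has full defect group $P$.

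To identify $B$ I would use the hypothesis in the sharp form $\cent{G}{P}=\cent{M}{P}$. The maximal $B_0(M)$-Brauer pair is $(P,B_0(\cent{M}{P}))$, and since $B$ has full defect its maximal Brauer pair is $(P,f)$ for a block $f$ of $\cent{G}{P}=\cent{M}{P}$ that lies over $B_0(\cent{M}{P})$; because these two centralizers coincide, "lying over'' becomes "equal'', so $f=B_0(\cent{G}{P})$. But $(P,B_0(\cent{G}{P}))$ is exactly the maximal Brauer pair of $B_0(G)$, whence $B=B_0(G)$ by the bijection between blocks and their maximal Brauer pairs. (Equivalently, one can route this through $\norm{G}{P}$ via Brauer's First and Third Main Theorems, or cite a ready-made uniqueness statement.)

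For the inequality, since $G/M$ is a $p'$-group every $\chi\in\irr{G/M}=\irr{G\mid\mathbf{1}_M}$ has $p'$-degree, and by the uniqueness just proved together with Lemma \ref{lem:blockabove}(iv) applied to $\theta=\mathbf{1}_M$ we get $\irr{G/M}\sbs\irr{B_0(G)}$. As $B_0(G)$ has full defect $P$, its height-zero characters are exactly its $p'$-degree characters, so $\irr{G/M}$ consists of height-zero characters of $B_0(G)$; in fact it is precisely the set of height-zero characters of $B_0(G)$ that are trivial on $M$, giving $k(G/M)\le k_0(B_0(G))$. For strictness I would exhibit one more height-zero character that is nontrivial on $M$: as $P>1$, $B_0(M)$ has positive defect, hence contains a character $\theta\neq\mathbf{1}_M$ of $p'$-degree (a block of positive defect has at least two height-zero characters). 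For any $\chi\in\irr{G\mid\theta}$ one has $\chi(1)/\theta(1)\mid[G:M]$, which is prime to $p$, so $\chi$ has $p'$-degree; by Lemma \ref{lem:blockabove}(iv) again $\chi\in\irr{B_0(G)}$, and $\chi$ is nontrivial on $M$ since $\theta$ is a constituent of $\chi_M$. Such a $\chi$ is a height-zero character of $B_0(G)$ outside $\irr{G/M}$, yielding the strict inequality.

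The main obstacle is the uniqueness of the covering block, and the place where the hypothesis is genuinely used is $\cent{G}{P}\sbs M$: without it a central $p'$-direct factor already produces several blocks covering $B_0(M)$, so the Brauer-pair bookkeeping that collapses $f$ onto $B_0(\cent{G}{P})$ (equivalently, locating the correct ready-made statement to invoke) is the delicate step. The only other nontrivial input is the standard fact that a block of positive defect carries at least two height-zero characters, which handles the extremal case $M=G$.
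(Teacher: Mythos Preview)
Your argument is correct and far more detailed than the paper's, which consists solely of the citation ``See \cite[Lemma 2.3]{HSV21}.''

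The Kn\"orr step pinning the defect group of any covering block to a Sylow $p$-subgroup is right. The subsequent identification $f=B_0(\cent{G}{P})$ via Brauer pairs is the one place where your write-up is compressed: the assertion that the root $f$ of $B$ at $P$ ``lies over $B_0(\cent{M}{P})$'' does not follow immediately from the bare fact that $B$ covers $B_0(M)$; linking a maximal $B$-Brauer pair to a maximal $B_0(M)$-Brauer pair requires an extra ingredient (Harris--Kn\"orr, or the theory of regular blocks). Your hedge is well placed: the cleanest route is exactly the ``ready-made uniqueness statement'' you allude to, namely that if $b$ is a block of $N\nor G$ with defect group $D$ and $\cent{G}{D}\sbs N$, then there is a unique block of $G$ covering $b$ (every covering block being regular with respect to $N$). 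Applied with $N=M$, $b=B_0(M)$, $D=P$, this gives the claim at once, and Brauer's Third Main Theorem identifies that block as $B_0(G)$.

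Your treatment of the strict inequality $k(G/M)<k_0(B_0(G))$ is complete and correct: the inclusion $\irr{G/M}\sbs\irr{B_0(G)}$ via Lemma~\ref{lem:blockabove}(iv), the identification of height zero with $p'$-degree for a block of full defect, the Clifford-theoretic divisibility $\chi(1)/\theta(1)\mid|G:M|$, and the existence of at least two height-zero characters in any block of positive defect (cf.\ \cite[Problem 3.11]{nbook}) are all correctly deployed. This portion genuinely supplements the paper, which defers the entire lemma to the cited reference.
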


\begin{proof}
See \cite[Lemma 2.3]{HSV21}.
\end{proof}

We will also make use of Alperin-Dade's theory of isomorphic
principal blocks.

\begin{theorem}[Alperin-Dade correspondence]\label{isomblocks}
Suppose that $N$ is a normal subgroup of $G$, with $G/N$ a
$p'$-group. Let $P \in \syl p G$ and assume that $G=N\cent GP$. Then
restriction of characters defines a natural bijection between the
irreducible characters of the principal $p$-blocks of $G$ and $N$.
In particular, $k(B_0(G))=k(B_0(N))$.
\end{theorem}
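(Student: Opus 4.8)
The plan is to exhibit the stated correspondence as restriction of characters, $\chi\mapsto\chi_N$, and to prove it is a bijection $\irr{B_0(G)}\to\irr{B_0(N)}$; the equality $k(B_0(G))=k(B_0(N))$ is then immediate. First note that since $G/N$ is a $p'$-group and $P$ is a $p$-subgroup, $PN/N$ is trivial, so $P\le N$, $P\in\syl p N$, and $B_0(G)$ and $B_0(N)$ share the common defect group $P$; also $B_0(G)$ covers $B_0(N)$. Morally the bijection is the shadow of the \emph{isomorphic blocks} phenomenon, namely $B_0(G)\cong B_0(N)$ as algebras via the inclusion $N\hookrightarrow G$, and I would keep that algebra isomorphism in reserve for the single step where it is genuinely needed. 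I would organize the character-theoretic argument into three claims: (A) restriction is well defined and lands in $B_0(N)$; (B) each $\chi\in\irr{B_0(G)}$ restricts irreducibly and the map is injective; (C) the map is onto $\irr{B_0(N)}$.

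For (A) and the place where the hypothesis enters, take $\chi\in\irr{B_0(G)}$ lying over $\theta\in\irr N$. By Clifford theory the constituents of $\chi_N$ form a single $G$-orbit, and since $B_0(G)$ covers the $G$-invariant block $B_0(N)$ they all lie in $B_0(N)$; hence $\theta\in\irr{B_0(N)}$. The essential use of $G=N\cent G P$ is the following: writing $C:=\cent G P$, the subgroup $C$ covers $G/N$ and acts on $N$ by $p'$-automorphisms centralizing $P\in\syl p N$. I would prove that such automorphisms fix $\irr{B_0(N)}$ pointwise, the natural route being the Brauer correspondence comparing $B_0(N)$ with the block of $\norm N P$ and the action of $\cent N P$, where the fixing becomes transparent; consequently every $\theta\in\irr{B_0(N)}$ is $G$-invariant. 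This invariance is exactly what fails, along with the theorem, once the centralizer condition is dropped, as the inversion action of a $2$-element on $\mathsf{C}_3\times\mathsf{C}_3$ (with $p=3$) illustrates. Note that Lemma~\ref{onlycovering} cannot be invoked directly here, since $\cent G P\not\sbs N$.

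For (B) and (C): with $\theta\in\irr{B_0(N)}$ now $G$-invariant, Lemma~\ref{lem:blockabove}(iii) produces some $\chi\in\irr{B_0(G)\mid\theta}$, and I would argue that $\chi$ is in fact an extension, $\chi_N=\theta$, and is the \emph{only} member of $\irr{B_0(G)\mid\theta}$. Granting extendability with extension $\hat\theta$, Gallagher's theorem gives $\irr{G\mid\theta}=\{\beta\hat\theta:\beta\in\irr{G/N}\}$; since $G/N$ is a $p'$-group, Lemma~\ref{lem:blockabove}(i),(v) yield $\irr{B_0(G)}\cap\irr{G/N}=\irr{B_0(G/N)}=\{\mathbf 1_G\}$, and the remaining task is to upgrade this to $\beta\hat\theta\in B_0(G)\iff\beta=\mathbf 1_{G/N}$, leaving a single character of $B_0(G)$ over each $\theta$. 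Injectivity and surjectivity of restriction, and hence $k(B_0(G))=k(B_0(N))$, then follow. The main obstacle is precisely the pair of ``coprime'' facts concentrated in the last two paragraphs: that $p'$-automorphisms centralizing a Sylow $p$-subgroup fix the whole principal block, and that each invariant $\theta$ admits a \emph{unique} extension inside $B_0(G)$. Both are exactly where $G=N\cent G P$ is indispensable, and the cleanest uniform way to secure them is to establish the underlying Alperin–Dade isomorphism $B_0(G)\cong B_0(N)$ at the level of block idempotents, comparing $k\cent G P$ with $k\cent N P$ through the Brauer homomorphism $\mathrm{Br}_P$.
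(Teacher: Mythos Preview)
The paper does not prove this theorem; its entire ``proof'' is a citation to Alperin \cite{Alp76} for solvable $G/N$ and to Dade \cite{Dad77} for the general case. So there is no in-paper argument to compare against---the result is being quoted as background.

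Your outline has the right shape, and you correctly isolate where the content lies: the $G$-invariance of every $\theta\in\irr{B_0(N)}$, and the fact that exactly one extension of each such $\theta$ lies in $B_0(G)$. But neither step is actually carried out. For invariance, the suggested route through the Brauer correspondence does not work as stated: Brauer's first main theorem is a bijection of \emph{blocks} between $N$ and $\norm N P$, not of the characters inside those blocks, so even granting that $\cent G P$ fixes $\irr{B_0(\norm N P)}$ pointwise (itself unproved), nothing transports this back to the individual members of $\irr{B_0(N)}$. For uniqueness of the extension in $B_0(G)$, the implication ``$\beta\hat\theta\in\irr{B_0(G)}\Rightarrow\beta=\mathbf 1_{G/N}$'' is precisely the hard direction, and neither Lemma~\ref{lem:blockabove} nor Gallagher supplies it. You effectively concede all this in your final paragraph by deferring both obstacles to ``the underlying Alperin--Dade isomorphism $B_0(G)\cong B_0(N)$''---but that isomorphism \emph{is} the theorem of \cite{Alp76,Dad77}. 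Once it is in hand the character bijection is immediate and the scaffolding (A)--(C) becomes superfluous; without it, the scaffolding does not stand on its own. As written, then, the proposal is circular: it reduces the statement to the very result the paper is citing.
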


\begin{proof}
The case where $G/N$ is solvable was proved in \cite{Alp76} and the
general case in \cite{Dad77}.
\end{proof}

Recall that by Gallagher's theorem, if $\theta\in \irr N$, where $N\nor G$, extends to $G$, then  $|\irr{G|\theta}|=k(G/N)$. The following is well-known
and guarantees the existence of an extension.

\begin{lemma}\label{extension}
Let $N \nor G$ and $\theta \in \irr N$ be $G$-invariant. If the
Sylow subgroups of $G/N$ are cyclic ({for all primes dividing $|G/N|$}), then $\theta$ extends to $G$.
\end{lemma}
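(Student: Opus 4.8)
The plan is to translate extendibility into the vanishing of a cohomological obstruction and then to check this obstruction prime by prime. Since $\theta$ is $G$-invariant, character-triple theory attaches to $(G,N,\theta)$ a class $\omega\in H^2(G/N,\CC^\times)$, where $\CC^\times$ carries the trivial $G/N$-action: concretely, one lifts a representation of $N$ affording $\theta$ to a projective representation of $G$, and $\omega$ is the class of its factor set. The fundamental fact (see \cite[Chapter~11]{Isaacs}) is that $\theta$ extends to $G$ precisely when $\omega$ is trivial, and that the same description holds for any intermediate subgroup $N\le H\le G$, with $\omega_H\in H^2(H/N,\CC^\times)$ obtained from $\omega$ by restriction.

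First I would reduce to the Sylow subgroups of $G/N$. For each prime $p\mid|G/N|$ choose $H_p$ with $N\le H_p\le G$ and $H_p/N\in\Syl_p(G/N)$. The transfer (corestriction) argument in group cohomology shows that restriction is injective on $p$-primary components, i.e. $H^2(G/N,\CC^\times)_{(p)}\hookrightarrow H^2(H_p/N,\CC^\times)$; hence $\omega=1$ if and only if $\omega_{H_p}=1$ for every such $p$. This is exactly the local-global principle for extendibility recorded in \cite[Chapter~11]{Isaacs}, so it suffices to extend $\theta$ to each $H_p$.

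The second step disposes of each local piece. As $\theta$ is invariant in $G$ it is invariant in $H_p$, and by hypothesis $H_p/N$ is cyclic. For a finite cyclic group $C$ acting trivially on the divisible group $\CC^\times$ one computes $H^2(C,\CC^\times)\cong\CC^\times/(\CC^\times)^{|C|}=1$; equivalently, cyclic groups have trivial Schur multiplier. Therefore $\omega_{H_p}$ lies in the trivial group and $\theta$ extends to $H_p$. Feeding this back into the reduction of the previous paragraph gives $\omega=1$, so $\theta$ extends to $G$, as desired.

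The only genuinely delicate point is the bookkeeping in setting up $\omega$: one must ensure that the factor set is taken with values in the \emph{trivially acted} group $\CC^\times$ --- which is legitimate because the scalars produced by a projective representation are central --- so that the cyclic computation $H^2(C,\CC^\times)=1$ really applies to the local obstructions $\omega_{H_p}$. Once this is arranged, both ingredients (injectivity of Sylow restriction on cohomology and the vanishing of the Schur multiplier of a cyclic group) are standard, and the proof is simply their combination.
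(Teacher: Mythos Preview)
Your proof is correct and follows essentially the same route as the paper's: the paper simply cites \cite[Theorem 11.7 and Corollary 11.21]{Isaacs}, which are precisely the Sylow reduction for extendibility and the extendibility over cyclic quotients, and your argument is an explicit cohomological unpacking of those two results.
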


\begin{proof}
The statement follows by \cite[Theorem 11.7 and Corollary
11.21]{Isaacs}.
\end{proof}

We also need a version of \cite[Lemma 2.2]{HSV21}. We write $\irrp p
{G}$ to denote the subset of $\irr G$ consisting of those characters
with degree coprime to $p$.

\begin{lemma}\label{Gallagher}
Let $N \nor G$ and let $B_0=B_0(G)$ be the principal $p$-block of
$G$. If $\theta \in \irr {B_0(N)}$ extends to $\chi \in \irr{B_0}$
with $(\chi(1), p)=1$, then
$$|\irrp p {B_0 | \theta}|\geq k_0(B_0(G/N)), $$
 where $\irrp p {B_0 | \theta}=\irrp p G \cap \irr{B_0 | \theta}$.
\end{lemma}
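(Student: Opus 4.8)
The plan is to combine Gallagher's theorem with the Alperin–Dade-style behavior of principal blocks under $p$-factor groups. Since $\theta$ extends to $\chi\in\irr{B_0}$, Gallagher's theorem (\cite[Corollary 6.17]{Isaacs}) gives a bijection $\beta\mapsto\beta\chi$ between $\irr{G/N}$ and $\irr{G\mid\theta}$. The key observation is that multiplication by a character $\beta$ inflated from $G/N$ does not move a character out of the principal block: since $\chi\in\irr{B_0(G)}$ and $\beta\in\irr{B_0(G/N)}$ (any $\beta\in\irr{G/N}$ lying in the principal block of the quotient), the product $\beta\chi$ should again lie in $B_0(G)$. I would justify this through the block-theoretic compatibility recorded in Lemma~\ref{lem:blockabove}, in particular the dominance relation between $B_0(G)$ and $B_0(G/N)$ together with part~(i) giving $\irr{B_0(G/N)}\subseteq\irr{B_0}$.

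First I would restrict attention to those $\beta\in\irr{G/N}$ lying in $\irr{B_0(G/N)}$, and to those with $p\nmid\beta(1)$, i.e.\ $\beta\in\irrp p{B_0(G/N)}$, which by definition form the height-zero characters of $B_0(G/N)$, so $|\irrp p{B_0(G/N)}|=k_0(B_0(G/N))$. For each such $\beta$, the product $\beta\chi$ satisfies $(\beta\chi)(1)=\beta(1)\chi(1)$, which is coprime to $p$ because both factors are; hence $\beta\chi\in\irrp p G$. Next I would verify $\beta\chi\in\irr{B_0\mid\theta}$: the restriction $(\beta\chi)_N=\beta_N\chi_N=\chi_N=\theta$ (as $\beta$ is trivial on $N$ and $\chi$ extends $\theta$) shows $\beta\chi$ lies above $\theta$, and the block membership $\beta\chi\in\irr{B_0}$ is the crucial point. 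Finally, the Gallagher bijection is injective on $\beta$, so distinct $\beta$ yield distinct $\beta\chi$, giving $|\irrp p{B_0\mid\theta}|\geq|\irrp p{B_0(G/N)}|=k_0(B_0(G/N))$, as claimed.

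The main obstacle I anticipate is cleanly establishing that $\beta\chi\in\irr{B_0(G)}$ whenever $\beta\in\irr{B_0(G/N)}$ and $\chi\in\irr{B_0(G)}$. One route is to use the central-character criterion for block membership: the central character $\omega_{\beta\chi}$ should factor as a suitable product tied to $\omega_\chi$ and the inflated $\omega_\beta$, and because $\beta$ is inflated from the principal block of $G/N$, the associated linear character $\beta$ interacts with $B_0$ exactly as the principal block of the quotient dominated by $B_0$. Alternatively, and perhaps more transparently, this tensor-product stability is precisely the statement underlying \cite[Lemma 2.2]{HSV21}, of which the present lemma is an announced variant; I would therefore follow that argument, adapting it to track the coprimality condition $(\chi(1),p)=1$ which guarantees that the resulting characters have $p'$-degree and hence land in $\irrp p{B_0}$. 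The remaining steps are routine once this block-stability is in hand.
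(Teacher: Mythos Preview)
Your proposal is correct and follows essentially the same route as the paper: Gallagher's bijection $\beta\mapsto\beta\chi$, restricted to height-zero $\beta\in\irr{B_0(G/N)}$, lands in $\irrp p{B_0\mid\theta}$ once one knows the key block-stability fact that $\beta\chi\in\irr{B_0(G)}$ whenever $\beta\in\irr{B_0(G/N)}$ and $\chi\in\irr{B_0(G)}$. The only difference is the reference for this stability: the paper simply cites \cite[Lemma~2.3]{GRSS20}, whereas you point toward \cite[Lemma~2.2]{HSV21} or a direct central-character computation; either works and the logical structure is identical.
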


\begin{proof}
By Gallagher's theorem, $\irrp p {B_0 | \theta}=\{ \beta \chi \in
\irr{B_0} \ | \ \beta \in \irrp p {G/N} \}$. By \cite[Lemma
2.3]{GRSS20} we have $\beta \chi \in \irr{B_0}$ whenever $\beta \in
\irr{B_0(G/N)}$, and the lemma follows.
\end{proof}

We note that the example of $G=\Sym_3$ and $N=\Alt_3=\langle (1 \, 2 \,3
)\rangle$ for $p=3$ shows that the inequality in
Lemma~\ref{Gallagher} is not always an equality.

%


\subsection{Proof of Theorem \ref{thm:kB0=6}}
As we will see later, known results on height zero characters in
principal blocks essentially reduce Theorem \ref{thm:kB0=6} to the
following.

\begin{theorem}\label{thm:casek0(B0)=k(B0)=6}
Let $G$ a finite group of order divisible by $p \in \{ 3, 5, 7\}$.
Suppose that $P\in\Syl_p(G)$ is abelian and the principal $p$-block
$B_0$ of $G$ has precisely $6$ irreducible characters. Then $p=3$
and $|P|=9$.
\end{theorem}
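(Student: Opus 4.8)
The plan is to induct on $|G|$, taking $G$ a counterexample of least order. The opening moves are the standard block-theoretic reductions. If $N:=\oh{p'}{G}\neq 1$, then $\irr{B_0(G/N)}=\irr{B_0(G)}$ by Lemma~\ref{lem:blockabove}(ii), so $G/N$ inherits an isomorphic abelian Sylow $p$-subgroup with $k(B_0(G/N))=6$; minimality forces $p=3$ and $|P|=9$ on $G/N$, hence on $G$, a contradiction. Thus $\oh{p'}{G}=1$. Consequently $G$ has no normal $p$-complement (such a complement would lie in $\oh{p'}{G}$), so $l(B_0)\geq 2$ by Lemma~\ref{lem:brauercharacters}(ii); in particular $G\neq P$, since otherwise $k(B_0)=|P|$ would be the prime power $6$. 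Lemma~\ref{lem:brauercharacters}(i) then also bounds $|\zent{G}|_p\leq 3$.

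Next I split according to the layer $E(G)$; since $\oh{p'}{G}=1$ we have $F^\ast(G)=\oh p G\,E(G)$ with $\cent G{F^\ast(G)}\subseteq F^\ast(G)$. Suppose first $E(G)\neq 1$ and choose a minimal normal subgroup $N=S^k$ with $S$ non-abelian simple; a normal $p'$-subgroup being impossible, $p\mid|S|$ and $S$ has abelian Sylow $p$-subgroups. By Lemma~\ref{lem:blockabove}(iii) every $G$-orbit on $\irr{B_0(N)}$ carries a character of $B_0(G)$, so $k(B_0(G))$ is at least the number of $G$-orbits on $\irr{B_0(N)}=\irr{B_0(S)}^{\times k}$. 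As the action factors through $\aut(S)\wr\Sym_k$, this is at least $\binom{t+k}{k}$, where $t$ is the number of $\aut(S)$-orbits on $\irr{B_0(S)}\setminus\{{\bf 1}_S\}$. For $p=3$, Theorem~\ref{thm:simplegroups3} gives $t\geq 3$, with $t=3$ only for $S\in\{\PSL_2(q),\PSL_3^{\pm}(q),\Alt_6\}$; then $\binom{t+k}{k}>6$ as soon as $k\geq 2$ (or $t\geq 6$), leaving only the cases $k=1$ with $t\in\{3,4,5\}$. These are handled by the multiplicity bounds of Lemmas~\ref{lem:2charabove} and~\ref{Gallagher}—which produce a second constituent of $B_0(G)$ above an $S$-orbit once the relevant quotient is non-cyclic—together with the statement in Theorem~\ref{thm:simplegroups3}(b) that $k(B_0(A))>6$ in those families. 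For $p\in\{5,7\}$ the estimates in the proof of Theorem~\ref{thm:simplegroups} give $k(B_0(G))>6$ outright. Hence $E(G)=1$.

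It remains to treat $E(G)=1$, i.e. $F^\ast(G)=\oh p G$, where $\cent G{\oh p G}\subseteq\oh p G\subseteq P$; since $P$ is abelian, $P\subseteq\cent G P\subseteq\cent G{\oh p G}\subseteq\oh p G\subseteq P$, so $P=\oh p G\nor G$. Writing $G=P\rtimes H$ with $H$ a $p'$-group (Schur--Zassenhaus), the hypothesis $\oh{p'}{G}=1$ makes the coprime action of $H$ on the abelian group $P$ faithful. By Lemma~\ref{lem:blockabove}(i) the $k(H)$ characters inflated from $G/P\cong H$ all lie in $B_0$, while Lemma~\ref{lem:blockabove}(iii) places at least one character of $B_0$ above each nontrivial $H$-orbit on $\irr P$; hence $k(H)+\#\{\text{nontrivial }H\text{-orbits on }\irr P\}\leq 6$. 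This is a concrete constraint on a faithful coprime linear action of a $p'$-group on an abelian $p$-group, placing $G$ among groups with at most six conjugacy classes. A short inspection shows it has no solution for $p\in\{5,7\}$ and, for $p=3$, forces $|P|=9$—realized for instance by $P={\sf C}_3\times{\sf C}_3$ with $H={\sf C}_2$ acting by inversion. Either way $G$ is not a counterexample.

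The main obstacle is the almost-simple/component step. The crude bound $\binom{t+k}{k}$ is weakest exactly when $k=1$ and $S$ is one of $\PSL_2(q)$, $\PSL_3^{\pm}(q)$ (or $\Alt_6$) with $|Q|=9$, where only three $\aut(S)$-orbits survive and the estimate yields merely four characters of $B_0(G)$; supplying the missing characters—and handling the possibility that $\cent G N\neq 1$—is precisely the content of Lemma~\ref{lem:2charabove} and the delicate case analysis behind Theorem~\ref{thm:simplegroups3}(b),(c). By comparison, the reduction to $P\nor G$ and the coprime-action endgame are routine once the simple groups are controlled.
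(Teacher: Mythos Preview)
Your reduction to $\oh{p'}{G}=1$ and your treatment of the case $E(G)=1$ (which amounts to the $p$-solvable case, handled via Fong's theorem and the list of groups with six classes) are fine and match the paper. The genuine problem is the component case $E(G)\neq 1$, where your argument is incomplete in several places.

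First, for $p\in\{5,7\}$ you assert that ``the estimates in the proof of Theorem~\ref{thm:simplegroups} give $k(B_0(G))>6$ outright.'' They do not: Theorem~\ref{thm:simplegroups} and its proof concern $k(B_0(S))$ for a \emph{simple} $S$, not the number of $\aut(S)$-orbits on $\irr{B_0(S)}$, and not $k(B_0(G))$ for your $G$. There is no analogue of Theorem~\ref{thm:simplegroups3} available for $p=5,7$ in the paper, so your orbit-counting bound $\binom{t+k}{k}$ has no input $t$ to feed on. Second, even for $p=3$, Theorem~\ref{thm:simplegroups3} requires $|Q|\geq 9$ for $Q\in\Syl_3(S)$; when $|Q|=3$ (which certainly can occur for a minimal normal $N=S^k$ inside a counterexample with $|P|>9$, since then $p\mid |G:N|$), your invocation of $t\geq 3$ is unjustified. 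Third, in the residual cases $k=1$, $t\in\{3,4,5\}$ your bound gives only $t+1\leq 6$ characters, and you defer the rest to Lemmas~\ref{lem:2charabove}, \ref{Gallagher} and Theorem~\ref{thm:simplegroups3}(b). But Lemma~\ref{lem:2charabove} needs $p\mid[Y:X]$ and a very specific ambient configuration, and Theorem~\ref{thm:simplegroups3}(b) gives $k(B_0(A))>6$ only for almost simple $A$; your $G$ need not be almost simple, and you have not controlled $\cent G S$ (which could have order divisible by $p$). None of these citations closes the gap without substantial further argument.

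The paper's proof avoids all of this by \emph{not} splitting on $E(G)$. Instead it takes any minimal normal $N$ and splits on whether $p\mid |G{:}N|$. If $p\mid|G{:}N|$, then $k(B_0(G/N))\in\{3,4,5\}$, and the prior classifications of principal blocks with $3$, $4$, or $5$ characters pin down both $p$ and $|PN{:}N|$; the remaining work is a tight case analysis (including the small-$|Q|$ situations you omit). If $p\nmid|G{:}N|$, then $P\leq N$, Alperin--Dade gives $k(B_0(N\cent G P))=k(B_0(S))^t$, and one exploits the existence of an $\aut(S)$-invariant nontrivial character in $B_0(S)$ (from \cite{GRSS20,Martinez}) together with the bound $k(B_0(S))\geq 7$ forced by minimality. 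Your sketch does not make contact with either mechanism, and that is where the missing characters actually come from.
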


\begin{proof}
Assume that the statement is false and let $G$ be a counterexample
of minimal order. In particular, $k(B_0)=6$ and $|P|\neq 9$. If
$|P|<9$ then $P$ would be ${\sf C}_3$, ${\sf C}_5$, or ${\sf C}_7$,
and Dade's cyclic-defect theory \cite[Theorem 1]{Dade66} quickly
shows that $k(B_0)\neq6$. We therefore indeed have
\[
|P|>9.
\]
Furthermore, by Lemma \ref{lem:blockabove}(ii), we have ${\bf
O}_{p'}(G)=1$.

We now show that $G$ is not $p$-solvable. Assume that $G$ is
$p$-solvable. Then, by Fong's theorem (see \cite[Theorem
10.20]{nbook}), $G$ has a unique $p$-block - the principal one, and
so $k(G)=k(B_0)=6$. An inspection of the list of finite groups with
6 conjugacy classes (\cite[Table 1]{VV85}) reveals no
counterexamples. Therefore $G$ is in fact not $p$-solvable.

Let $N\nor G$ be a minimal normal subgroup of $G$. As ${\bf
O}_{p'}(G)=1$, we have that $p$ divides the order of $N$, which
therefore is either an elementary abelian $p$-group or a semisimple
group.

\medskip

(A) We claim that $p$ does not divide $|G:N|$.

Assume otherwise. Let $\bar B_0:=B_0(G/N)$ be the principal block of
$G/N$. On one hand we have $k(\bar B_0)\leq 5$ by Lemma \ref{lem:blockabove} (i) and (iii);
on the other hand, $k(\bar B_0)\geq \lceil
2\sqrt{p-1}\rceil\geq 3$ by \cite[Theorem 1.1]{Hung-SchaefferFry}.
In summary,
\[k(\bar B_0)\in \{3,4,5\}.\]
Also notice that, by Lemma \ref{lem:blockabove}, we have that $
\irr{B_0}\cap \irr{G/N}$ is a union of blocks of $G/N$ including
$\bar B_0$. Hence, we can write
\[ \irr {B_0}=(\irr{B_0} \cap \irr{G/N})\cup \{ \chi_1, \ldots, \chi_l \} \, , \]
with $l\in \{ 1, 2, 3\}$, where the union is disjoint.

\smallskip

(A1) Assume that $l=3$. Then $\irr{B_0} \cap \irr{G/N}=\irr {\bar
B_0}$ and $k(\bar B_0)=3$. It follows from \cite[Theorem 3.1]{KS21}
that
\[p=3 \text{ and } |PN:N|=3.\] Moreover, the action of $G$ on $\irr
{B_0(N)}\setminus \{ {\bf 1}_N\}$ defines at most 3 orbits.

Suppose first that $N$ is elementary abelian. Then \[P\cent G
P=\cent G P \sbs \cent G N=:M\nor G.\] By Lemma~\ref{onlycovering},
$B_0$ is the only block of $G$ covering $B_0(M)$ and $1\leq
k(G/M)\leq 3$, as $\irr{G/M}\sbs \irr{B_0}\cap \irr{G/N}=\irr{\bar
B_0}$. Since $G/M$ is a $3'$-group, we necessarily have $|G/M|\leq
2$. If $G=M$, then $N \sbs \zent G$ and by Lemma
\ref{lem:brauercharacters}, $6=k(B_0)\geq |N|l(B_0)$, implying that
$|N|$ must be 3, so that $|P|=3|N|=9$, a contradiction. We therefore
must have $|G/M|=2$. As the action of $G/M$ on the nontrivial
elements of $ N$ defines at most 3 orbits, we have $|N|\leq 7$, and
thus $|N|=3$, which implies that $|P|=9$, a contradiction again.

Now suppose that $N\cong S^t$ for some non-abelian simple group $S$ of order
divisible by $p=3$
and $t\in\ZZ^+$. By \cite[Theorem~2.2]{Martinez}, $\aut (S)$
produces at least 2 orbits on $\irr{B_0(S)}\setminus \{ {\bf 1}_S \}
$. Since the action of $G$ on $\irr {B_0(N)}\setminus \{ {\bf
1}_N\}$ defines at most 3 orbits, we deduce that  $t=1$ and $N=S$ is
simple (otherwise, assuming that $\alpha, \beta\in
\irr{B_0(S)}\setminus \{ {\bf 1}_S \}$ lie in different
$\aut(S)$-orbits, then $\alpha^t$, $\alpha\times {\bf 1}_S^{t-1}$,
$\beta^t$ and $\beta \times {\bf 1}_S^{t-1}$ would lie in different
$G$-orbits). We then have $|P:(P\cap S)|=|PS:S|=3$. The fact that
$G$ is a counterexample then implies that $Q:=P\cap S \in \syl 3 S$
has order at least 9.

We now use Theorem \ref{thm:simplegroups3}, with the almost simple
group $G/\bC_G(S)$ in place of $A$, to arrive at one of the
following cases.

\begin{enumerate}
\item[(a)] The action of $G$ on $\irr{B_0(S)}\setminus \{ {\bf 1}_S \}$ defines at least 4 orbits, a contradiction.

\item[(b)] $S \in \{ {\rm PSL}_2(q), {\rm PSL}_3(q), {\rm PSU}_3(q) \}$ with  $(3, q)=1$,
and $3$ divides $|G:S\cent G S|$. Since $|G:S|_3=3$, $\cent G S$ has
order not divisible by 3. Then $\cent G S=1$, because $\oh{3'}G=1$ and $G=A$ is almost
simple. Theorem \ref{thm:simplegroups3}(b) yields $k(B_0)>6$, a
contradiction.

\item[(c)] $S=\Alt_6$ and $A:=G/\cent G S$ has a subgroup isomorphic to ${\rm
M}_{10}$. It follows that $A$ is either $M_{10}$ or the full
automorphism group $\aut(S)$. In fact, as $k(B_0(A))\leq k(B_0)=6$
and $k(B_0(\aut(S)))>6$, we have $A\cong {\rm M}_{10}$, in which
case $k(B_0(A))=k(B_0)=6$. We conclude that 3 does not divide the
order of $\cent G S$ (otherwise $B_0(\cent G S)$ contains a
nontrivial character, say $\theta$, and by
Lemma~\ref{lem:blockabove}(iii), $B_0(G)$ would have a character
lying above $\theta$, implying that $k(B_0(A))<k(B_0)$). In
particular, $|G|_3=|\Alt_6|_3=9$ and $G$ is not a counterexample.
\end{enumerate}

\smallskip

(A2) Assume that $l=2$. If $\irr{\bar B_0}\subsetneq \irr{B_0}\cap
\irr{G/N}$ then $\irr{B_0}$ would contain a block of $p$-defect zero
of $G/N$, but that is impossible as every character in $\irr{B_0}$
has degree coprime to $p$ by \cite{KM13}. Thus $\irr{\bar B_0}= \irr{B_0}\cap
\irr{G/N}$. In particular $k(\bar B_0)=4$ and,
by \cite[Theorem 1.1]{KS21}, \[p=5 \text{ and } |PN:N|=5.\] In this
case the action of $G$ on $\irr {B_0(N)}\setminus \{ {\bf 1}_N\}$
defines at most 2 orbits.

Suppose that $N$ is elementary abelian. Then, as above,  $P\sbs \cent G
N:=M\nor G$. By Lemma~\ref{onlycovering}, $B_0$ is the only block covering $B_0(M)$.
Consequently, Lemma \ref{lem:blockabove}(iv) implies that $\irr{G/M}\sbs \irr{\bar B_0}$ and so
$1\leq k(G/M)\leq4$. Moreover, using Lemma \ref{lem:blockabove} (iii) and (iv),
considering a non-principal character $\theta \in \irr{B_0(M/N)}\sbs
\irr{B_0(M)}$, we in fact have that $k(G/M)<4$. If $G=M$ then $N \sbs
\zent G$ and, by Lemma \ref{lem:brauercharacters}(i), $6=k(B_0)\geq
5l(B_0)$. It follows that $l(B_0)=1$ and, by Lemma
\ref{lem:brauercharacters}(ii), $G$ would be $p$-solvable, a
contradiction. Hence the the action of the nontrivial group $G/M$ on
$N$ is faithful and defines at most 2 orbits on the nontrivial
elements of $N$. The only possibility is that $|N|=5$ and $|G/M|=2$.
Since $\irr{B_0(M)}$ consists of the irreducible constituents of
$\psi_M $ for $\psi \in \irr{B_0}$, the fact $|G/M|=2$ forces
$k(B_0(M))\in \{ 3, 6, 9\}$. As $p=5$, the main result of
\cite{Hung-SchaefferFry} implies that $k(B_0(M))\geq 2\sqrt{p-1}=4$.
Furthermore, $k(B_0(M))$ cannot be 6 neither by the minimality of
$G$. So $k(B_0(M))=9$. Lemma \ref{lem:brauercharacters} then implies
that $l(B_0(M))=1$ and $M$ is $p$-solvable. Thus $G$ is $p$-solvable
as well, a contradiction.

Suppose that $N \cong S^t$ for some non-abelian simple group $S$ of
order divisible by $p=5$ and $t \in \ZZ^+$. If $t>1$, then, as before, \cite[Theorem
2.2]{Martinez} shows that $G$ defines more than 2 orbits when acting
on $\irr {B_0(N)}\setminus \{ {\bf 1}_N\}$, violating the fact that
$l=2$. Hence $N=S \nor G$.

Assume first that $S\neq P\Omega_8^+(q)$. By \cite[Proposition
2.1(i)]{GRSS20}, there are some ${\bf 1}_S\neq\theta \in
\irr{B_0(S)}$ that extends to $\hat \theta \in \irr{B_0(G)}$ and
${\bf 1}_S\neq\varphi \in \irr{B_0(S)}$ with $\varphi(1) \nmid
\theta(1)$. Furthermore, both $\theta$ and $\varphi$ have degree
coprime to $p$. Recall that $k(B_0(G/S))=k(B_0(G/N))\geq 3$. We
therefore can write
\[\irr{B_0}=\{ {\bf 1}_G, \alpha, \beta, \gamma, \chi_1, \chi_2 \}\]
so that $\alpha, \beta, \gamma$ contain $S$ in their kernels,
$\chi_1=\hat \theta$, and $\chi_2$ lies over $\varphi$. In
particular, $\chi_1$ is the only character in $\irr{B_0}$ that is
above $\theta$. This together with the facts that $p\nmid \theta(1)$
and $p\mid |G:S|$ would contradict Lemma~\ref{Gallagher}.

We are left with the case $S= {\rm P}\Omega_8^+(q)$. Then by
\cite[Proposition 2.1.(ii)]{GRSS20}, the set $\irr{B_0(S)}\setminus
\{ {\bf 1}_S \}$ contains two $\aut (S)$-invariant members. Since
$k(B_0(S))\geq 2\sqrt{p-1}=4$ by \cite[Theorem
1.2.(i)]{Hung-SchaefferFry}, it follows that the action of $G$ on
$\irr{B_0(S)}\setminus \{ {\bf 1}_S\}$ defines at least 3 different
orbits, which is again a contradiction.

\smallskip

(A3) Assume that $l=1$. If $\irr{\bar B_0}\subsetneq \irr{B_0}\cap
\irr{G/N}$ then arguing as in the second sentence of case (A2) we
see that $\irr {B_0}$ contains a block $\bar B_1$ of $G/N$ with
$k(\bar B_1)=2$. But $k(\bar B_1)=2$ would force $p=2$ by
\cite[Theorem A]{Brandt}, contradicting our hypothesis on $p$. Hence
$k(\bar B_0)=5$. Using the main result of \cite{RSV21}, we then have
\[p\in \{ 5, 7 \} \text{ and } |PN:N|=p.\]

In this case all the characters in $\irr {B_0(N)}\setminus \{ {\bf
1}_N\}$ must be $G$-conjugate (and lie under $\chi_1$). In
particular, the set of character degrees of $\irr{B_0(N)}$ has size
2, and it follows from \cite[Theorem A]{Martinez} that $N$ is
$p$-solvable. We therefore conclude that $N$ is an elementary
abelian $p$-group. Also, $N\subseteq P$ and $|P/N|=p$.

As before, $B_0$ is the only block covering $B_0(M)$, where
$M:=\cent G N\nor G$. Moreover, $k(G/M)<5$. If $G=M$, then $N \sbs \zent
G$ and Lemma \ref{lem:brauercharacters}(i) implies that
$6=k(B_0)\geq |N|l(B_0)$, forcing $l(B_0)=1$ as $p\geq 5$, and so
$G$ would be $p$-solvable by Lemma~\ref{lem:brauercharacters}(ii), a
contradiction. Therefore the $p'$-group $G/M\in \{ {\sf C}_2, {\sf
C}_3, \Sym_3, {\sf C}_4, {\sf C}_2\times {\sf C}_2, {\sf D}_{10},
\Alt_4\}$
 acts faithfully on $N$ and transitively on $N\setminus \{1\}$.
 (See \cite{VV85} for the list of finite groups of relatively small class number.)
There are only two possible scenarios: $|G/M|=4$ and $|N|=5$ or
$G/M\cong {\Sym}_3$ and $|N|=7$. The latter case in fact cannot
happen as $\aut({\sf C}_7)\cong {\sf C}_6$. In the former case,
since $\irr{G/M}\sbs \irr{\bar B_0}$ and $k(\bar B_0)=5$, the
principal block $\bar B_0$ of $G/N$ would have only 2 different
character degrees. It again follows from the main result of
\cite{Martinez} that $G/N$, and therefore $G$, would be
$p$-solvable, a contradiction.

\bigskip

(B) We have shown that $p$ does not divide $|G:N|$. Therefore we
have $P\leq N$. In particular, as $G$ is not $p$-solvable, $N$ is
isomorphic to a direct product $S^t$ of $t$ copies of a non-abelian
simple group $S$ of order divisible by $p$.

Let $M:=N \cent G P$. Then $M \nor G$ by the Frattini argument, and
\[k(B_0(M))=k(B_0(S))^t\] by Theorem \ref{isomblocks}. Moreover,
$G/M$ transitively permutes the simple factors of $N$. By
Lemma~\ref{onlycovering}, $B_0$ is the only block of $G$ covering
$B_0(M)$. In particular, $\irr{G | \phi}\sbs \irr{B_0}$ for every
$\phi \in \irr{B_0(M)}$. Moreover, as $k(B_0)=6$, we have $1\leq
k(G/M)<6$.

If $G=M$ then $t=1$ and $k(B_0(S))=k(B_0(G))=6$, and it follows from
Theorem \ref{thm:simplegroups} that $|P|=9$, contradicting the
choice of $G$ as a counterexample. If $k(G/M)=5$ then all members of
$\irr{B_0(M)}\setminus \{ {\bf 1}_M \}$ lie under the same character
in $\irr{B_0}$. In particular, \cite[Theorem A]{Martinez} implies
that $M$ is $p$-solvable, and so would be $G$, which is not the
case. We have shown that
\[k(G/M)\in\{2,3,4\}.\]

Recall that $|P|>9$. Therefore, by previous results on possible
Sylow structure of finite groups with up to $5$ characters in the
principal block (see Theorems 4.1--4.5 of \cite{HSV21}), we have
$k(B_0(M))\geq 6$. Note that $P\in\Syl_p(M)$, and so the minimality
of $G$ as a counterexample further implies that
\[k(B_0(M))\geq 7.\]

Let us first handle the special case $(S,p)= ({\rm PSL}_2(3^m),3)$
for some $m\geq 3$. If $S={\rm PSL}_2(27)$ then $\aut(S)$ has $5$
orbits on $\irr {B_0(S)}\backslash \{ \textbf{1}_S\}$ (\cite{Atl1}), so
$k(B_0)\geq 5+k(G/M)\geq 7$ and we would be done. For $S={\rm
PSL}_2(q)$ with $q\geq 81$ we have $k(B_0(S)) =(q+3)/2 \geq 42$ (the
ordinary characters of $B_0(S)$ are $\Irr(S)\backslash \{ \St_S\}$ and
$k({\rm PSL}_2(q))=(q+5)/2$ for odd $q$). We therefore have at least
$41$ nontrivial members in $\Irr(B_0(M))$. With $k(G/M) \leq4$, we
have $|G/M|\leq12$. So $G$ produces at least $\lceil 41/12\rceil=4$
orbits on $\Irr(B_0(M)) \backslash\{ \textbf{1}_M\}$. As $k(B_0)=6$, it
follows that $k(G/M)\leq 2$, which would imply that $G$ now defines
more than $41/2$ orbits on $\Irr(B_0(M)) \backslash\{ \textbf{1}_M\}$, a
contradiction.

We may assume from now on that $(S,p)\neq ({\rm PSL}_2(3^m),3)$ for
every $m\geq 3$. Then $\irr{B_0(S)}\setminus \{ {\bf 1}_S \}$
contains an $\aut(S)$-invariant member $\phi$ by \cite[Proposition
2.1]{GRSS20} for $p \geq 5$ and by \cite[Proposition 3.7]{Martinez}
for $p=3$ and $S\neq {\rm PSL}_2(3^n)$ for $n\geq 2$. (Note that the irreducible
character of degree 10 of $S={\rm Alt}_6\cong {\rm PSL}_2(9)$ is $\aut (S)$-invariant
and lies in the principal 3-block.) In particular, by Theorem
\ref{isomblocks}, there exists some \[{\bf 1}_M\neq \psi \in
\irr{B_0(M)}\] that is $G$-invariant. More precisely, under the
Alperin-Dade correspondence, $\psi$ corresponds to $\phi^t$ where
$\phi \in \irr{B_0(S)}\setminus \{ {\bf 1}_S \}$ is
$\aut(S)$-invariant.

\medskip

(B1) Assume that $k:=k(G/M)\in\{2,3\}$. Then all the Sylow subgroups of $G/M$ are cyclic and
$\psi$ extends to $G$ by Lemma \ref{extension}. Therefore
$\Irr(B_0)$ has $2k$ members lying over the characters
$\textbf{1}_M$ and $\psi$ in $\Irr(B_0(M))$. As $k(B_0(M))\geq 7$,
the remaining at least $5$ characters in $\Irr(B_0(M))$ produce at
least $\lceil 5/k\rceil$ additional irreducible characters in $B_0$.
We arrive at
\[
k(B_0)\geq 2k+\lceil 5/k\rceil,
\]
which is greater than 6 when $k\in\{2,3\}$, a contradiction.

\medskip

(B2) Finally we consider $k(G/M)=4$. Now $\Irr(B_0)$ contains four
members lying above $\textbf{1}_M$ and at least one member above
$\psi$. Furthermore, the set $\Irr(B_0(M))\backslash
\{\textbf{1}_M,\psi\}$, which again has cardinality at least 5,
would cover at least two $G$-orbits. As a result, we have
\[k(B_0)\geq 4+1+2=7,\] and this contradiction completes the proof.
\end{proof}

The following result on $2$-blocks with metacyclic defect groups
will be useful. The weaker case of maximal-class defect groups,
which is what we really need, is due to Brauer \cite{Brauer72} and
Olsson \cite{Ols75}.

\begin{lemma}\label{lem:Sambale}
Let $B$ be a $2$-block of a finite group with metacyclic defect
group $D$ of order at least $8$. Then either $k(B)\geq 7$ or one of
the following holds:
\begin{enumerate}[{\rm (i)}]
\item $B$ is nilpotent and $k(B)=k(D)$.

\item $D={\sf D}_8$ and $k(B)=5$.
\end{enumerate}
\end{lemma}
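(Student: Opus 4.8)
The plan is to reduce to the classification of metacyclic $2$-groups and read off $k(B)$ from the known block invariants in each resulting family. Recall that a metacyclic $2$-group $D$ of order at least $8$ is either of maximal class---in which case it is dihedral ${\sf D}_{2^n}$, semidihedral ${\sf SD}_{2^n}$ (with $n\geq 4$), or generalized quaternion ${\sf Q}_{2^n}$ (with $n\geq 3$)---or else it is not of maximal class; in the latter situation $D$ has rank at most $2$ and is either abelian of the form ${\sf C}_{2^a}\times{\sf C}_{2^b}$ or one of the non-abelian metacyclic $2$-groups that are not of maximal class. I would treat the maximal-class and the non-maximal-class families separately.

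For the maximal-class defect groups the relevant blocks are the \emph{tame} blocks, whose invariants are completely described by the work of Brauer, Olsson \cite{Brauer72,Ols75}, and Erdmann. For dihedral defect group ${\sf D}_{2^n}$ one has $k(B)=2^{n-2}+3$; this equals $5$ exactly when $n=3$ (yielding the exceptional value in (ii), and also (i) in the nilpotent subcase) and is at least $7$ once $n\geq 4$. For semidihedral and for quaternion defect groups of order at least $16$ the corresponding formulas give $k(B)\geq 7$ directly. The only remaining maximal-class group is ${\sf Q}_8$: here a nilpotent block has $k(B)=k({\sf Q}_8)=5$, accounted for by (i), while a non-nilpotent block has inertial quotient ${\sf C}_3$ and satisfies $k(B)=7$ (as realized, for instance, by the principal $2$-block of $\SL_2(3)$). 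Thus among maximal-class defect groups the only ways to have $k(B)<7$ are precisely the two listed exceptions.

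For the non-maximal-class metacyclic defect groups I would invoke Sambale's determination of $k(B)$ for such groups. The inertial quotient $E$ is a $2'$-subgroup of $\Aut(D)$; when $E=1$ the block is nilpotent, so $k(B)=k(D)$ by Brou\'{e}--Puig, and a direct check shows that every non-maximal-class metacyclic $2$-group of order at least $8$ already has $k(D)\geq 7$ (the smallest are ${\sf C}_8$ and ${\sf C}_4\times{\sf C}_2$, each with $k(D)=8$), placing these in case (i). When $E\neq 1$, a nontrivial odd-order automorphism forces $D\cong{\sf C}_{2^a}\times{\sf C}_{2^a}$ in the abelian case (so $|D|\geq 16$) or one of the larger non-abelian metacyclic groups; in every such case Sambale's formulas---resting on perfect isometries and explicit orbit counts of $E$ on $\Irr(D)$---yield $k(B)\geq 7$. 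For example, for $D={\sf C}_4\times{\sf C}_4$ with $E={\sf C}_3$ one computes $k(B)=\sum_{[\lambda]} k(E_\lambda)=3+5=8$. Hence no further exceptions arise here.

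The main obstacle is the precise bookkeeping in the two families where $k(B)$ genuinely depends on the inertial quotient rather than on $|D|$ alone: the tame blocks with defect group ${\sf Q}_8$ and the non-abelian non-maximal-class groups. In both one must use the exact character-count formulas (Erdmann in the tame case, Sambale for the rest) rather than a soft lower bound, and the delicate points are confirming that the non-nilpotent ${\sf Q}_8$ block lands exactly at $k(B)=7$ and that ${\sf D}_8$ is the unique metacyclic defect group of order at least $8$ admitting a non-nilpotent block with $k(B)<7$.
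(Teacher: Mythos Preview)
Your argument is correct and is, in substance, the same as the paper's, which simply cites \cite[Theorem~8.1]{Sam14}; that theorem packages exactly the case split you carry out (maximal-class tame blocks via Brauer--Olsson, and the remaining metacyclic defect groups via Sambale's invariants), so your write-up is effectively an unpacking of that single citation rather than a different route. One small sharpening: for a non-maximal-class metacyclic $2$-group $D$, the odd part of $\Aut(D)$ is nontrivial only in the homocyclic abelian case $D\cong{\sf C}_{2^a}\times{\sf C}_{2^a}$, so the ``larger non-abelian'' branch you allow for actually does not arise---but since you defer to Sambale's formulas there anyway, this does not affect the validity of the proof.
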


\begin{proof}
This follows from \cite[Theorem 8.1]{Sam14}.
\end{proof}

\begin{theorem}\label{thm:kB0=6repeated}
Let $G$ a finite group, $p$ a prime, and $P\in\Syl_p(G)$. Suppose
that the principal $p$-block of $G$ has precisely six irreducible
characters. Then $p=3$ and $|P|=9$.
\end{theorem}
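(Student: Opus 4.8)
The plan is to reduce, in stages, to the abelian situation already handled by Theorem~\ref{thm:casek0(B0)=k(B0)=6}. First I would bound the prime. Since $B_0$ has $k(B_0)=6$ ordinary characters, the principal-block Héthelyi--Külshammer bound $k(B_0)\geq\lceil 2\sqrt{p-1}\rceil$ of \cite[Theorem~1.1]{Hung-SchaefferFry} forces $2\sqrt{p-1}\leq 6$, hence $p-1\leq 9$ and $p\in\{2,3,5,7\}$ (note $p\mid|G|$, since otherwise $B_0$ would contain only ${\bf 1}_G$).

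Next I would pass to height-zero characters. As $k_0(B_0)$ counts the height-zero members of $B_0$, we have $k_0(B_0)\leq k(B_0)=6$. If $k_0(B_0)\leq 5$, then the classification of the Sylow structure of groups with $k_0(B_0)\leq 5$ from \cite{HSV21} applies, and I would run through the resulting list of possibilities, using Dade's cyclic-defect theory \cite[Theorem~1]{Dade66} when $P$ is cyclic and Lemma~\ref{lem:Sambale} when $P$ is a metacyclic $2$-group, to verify that none of them is compatible with $k(B_0)=6$. This forces $k_0(B_0)=k(B_0)=6$, so \emph{every} irreducible character of $B_0$ has height zero; by the Malle--Navarro theorem \cite{MN21} (Brauer's height zero conjecture for principal blocks), $P$ is then abelian.

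With $P$ abelian in hand, the odd primes are immediate: for $p\in\{3,5,7\}$, Theorem~\ref{thm:casek0(B0)=k(B0)=6} applies directly and yields $p=3$ and $|P|=9$, as desired. It therefore remains to rule out $p=2$, where $P$ is an abelian $2$-group, and I would split according to its structure. If $P$ is cyclic, Dade's formula gives $k(B_0)=e+(|P|-1)/e$ with $e\mid(|P|-1)$; writing $e+(2^n-1)/e=6$ with both summands odd forces $2^n-1\in\{5,9\}$, i.e.\ $2^n\in\{6,10\}$, which is impossible, so $k(B_0)\neq 6$. If $P$ is metacyclic but not cyclic of order at least $8$, Lemma~\ref{lem:Sambale} gives either $k(B_0)\geq 7$ or a nilpotent block with $k(B_0)=k(P)=|P|\geq 8$; and the only remaining metacyclic case $P\cong{\sf C}_2\times{\sf C}_2$ has $k(B_0)=4$ by the classical analysis of Klein four defect groups. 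In each of these cases $k(B_0)\neq 6$.

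The main obstacle is the case where $P$ is abelian of rank at least $3$ (hence not metacyclic, so that Lemma~\ref{lem:Sambale} does not apply), the smallest instance being $P\cong{\sf C}_2\times{\sf C}_2\times{\sf C}_2$. Here I would control $k(B_0)=k_0(B_0)$ through the inertial quotient $E$ of $B_0$, which is a $2'$-group acting faithfully on $P$, and argue that the local structure $P\rtimes E$ already forces at least $7$ characters in the principal block; for $P\cong{\sf C}_2^3$ one checks directly that every odd-order subgroup $E\leq\Aut(P)\cong\GL_3(2)$ (namely $1$, ${\sf C}_3$, ${\sf C}_7$, and ${\sf C}_7\rtimes{\sf C}_3$) gives $k(B_0(P\rtimes E))=8$. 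The delicate point is transferring this local count back to $B_0(G)$, for which I would invoke the $p$-local lower bound on $k_0(B_0)$ established in \cite{HSV21} (or the verified cases of the Alperin--McKay/Broué framework for abelian defect groups), thereby obtaining $k(B_0)\geq 7$ and the contradiction that eliminates $p=2$ and completes the proof.
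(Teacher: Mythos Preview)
Your overall strategy coincides with the paper's: bound $p$ via \cite{Hung-SchaefferFry}, analyze $k_0(B_0)$ using the classification in \cite{HSV21}, reduce to abelian $P$ via \cite{MN21}, and then invoke Theorem~\ref{thm:casek0(B0)=k(B0)=6}. Your treatment of $k_0(B_0)\leq 5$ can be made precise exactly as the paper does (the list from \cite{HSV21} forces either $P$ cyclic, handled by \cite{KM13} and Dade, or $p=2$ with $P$ of maximal class, handled by Lemma~\ref{lem:Sambale} together with the fact that no $p$-group has exactly six conjugacy classes).

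The genuine gap is your elimination of $p=2$ when $P$ is abelian of rank at least~$3$. You correctly flag that transferring the local count $k(P\rtimes E)$ back to $k_0(B_0(G))$ is ``the delicate point'', but neither of your proposed fixes closes it. The equality $k_0(B_0(G))=k_0(B_0(\bN_G(P)))$ is the Alperin--McKay conjecture for principal blocks, and Brou\'e's conjecture is likewise open for arbitrary abelian $2$-groups, so ``verified cases'' cannot be invoked without restricting $P$ in a way you have not justified. As for the $p$-local lower bound of \cite{HSV21}, that paper's role here is the classification for $k_0(B_0)\leq 5$; it does not supply an inequality of the strength $k_0(B_0(G))\geq k(P\rtimes E)$ that your argument needs. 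You also only carry out the local computation for $P\cong{\sf C}_2^3$, leaving larger abelian $2$-groups of rank~$\geq 3$ as a bare assertion.

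The paper bypasses this entire case analysis. Once $P$ is known to be abelian with $k_0(B_0)=k(B_0)=6$, a single application of Landrock's classical result \cite[Corollary~1.3(i)]{Landrock81} on $2$-blocks rules out $p=2$ immediately; no structural case division on $P$ is needed. You should replace your $p=2$ discussion after the abelian reduction with this citation.
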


\begin{proof}
Recall that $B_0$ denotes the principal $p$-block of $G$. First, as
$k(B_0)=6>1$, we know that $G$ has order divisible by $p$. As
before, let $k_0(B_0)$ denote the number of height zero characters
in $B_0$. Obviously $k_0(B_0)\leq 6$ and moreover $k_0(B_0)\geq 2$
by \cite[Problem 3.11]{nbook}. By \cite[Theorems 4.2 and 4.3]{HSV21}
we further have $k_0(B_0)\geq 4$. The case $k_0(B_0)=5$ cannot
happen since otherwise, by \cite[Theorem 1.2(C)]{HSV21}, $P$ is
cyclic, and thus $k(B_0)=k_0(B_0)=5$ by the main result of
\cite{KM13}, which is a contradiction. We now have
\[k_0(B_0)\in\{4,6\}.\]

Suppose that $k_0(B_0)=4$. Then it follows from \cite[Theorem
1.2(B)]{HSV21} that $p=2$ and $P$ has maximal class (so $P$ is
either dihedral, semidihedral, or generalized quaternion). Since
$k_0(B_0)<k(B_0)$, $P$ is nonabelian by again \cite{KM13}. In
particular, $P$ is a metacyclic group of order at least $8$. Using
Lemma \ref{lem:Sambale}, we deduce that $B_0$ is nilpotent and
$6=k(B_0)=k(P)$. However, according to \cite[Table 1]{VV85}, there
is no $p$-group with precisely six conjugacy classes.

We are left with the case $k_0(B_0)=k(B_0)=6$, and so $P$ is abelian
by \cite{MN21}. Using \cite[Corollary 1.3.(i)]{Landrock81}, we
deduce that $p$ must be odd. On the other hand, the main result of
\cite{Hung-SchaefferFry} implies that $p\leq k(B_0)^2/4+1=10$, and
we conclude that $p\in\{3,5,7\}$. The result now follows from
Theorem \ref{thm:casek0(B0)=k(B0)=6}.
\end{proof}

\subsection{Proof of Theorem \ref{thm:kB0=6-main}}

\begin{theorem}\label{thm:kB0=6-main-repeated}
Let $G$ a finite group, $p$ a prime, and $P\in\Syl_p(G)$. Let $B_0$
denote the principal $p$-block of $G$. Then $k(B_0)=6$ if and only
if precisely one of the following happens:
\begin{itemize}
\item[(i)] $P= {\sf C}_9$ and $|\norm G P :\cent G P |=2$.

\item[(ii)] $P={\sf C}_3 \times {\sf C}_3$ and either $\norm G P /\cent G P \in\{{\sf C}_4, {\sf
Q}_8\}$ or $\norm G P /\cent G P\cong {\sf C}_2$ acts fixed-point
freely on $P$.

\end{itemize}
\end{theorem}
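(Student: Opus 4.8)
The plan is to leverage Theorem~\ref{thm:kB0=6repeated}, which already tells us that $k(B_0)=6$ forces $p=3$ and $|P|=9$. Thus throughout we may assume $P$ is one of the two groups of order $9$, namely $\mathsf{C}_9$ or $\mathsf{C}_3\times\mathsf{C}_3$, and the entire proof reduces to a \emph{local} analysis: determining exactly which fusion/automizer data $\norm{G}{P}/\cent{G}{P}$ are compatible with $k(B_0)=6$. The forward direction (``$k(B_0)=6$ implies (i) or (ii)'') and the reverse direction (``(i) or (ii) implies $k(B_0)=6$'') will be handled separately, with the reverse direction providing the examples promised in the introduction.

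For the cyclic case $P=\mathsf{C}_9$, I would invoke Dade's cyclic-defect theory \cite[Theorem~1]{Dade66} exactly as in the opening of the proof of Theorem~\ref{thm:simplegroups}: here $k(B_0)=f+\frac{|P|-1}{f}=f+\tfrac{8}{f}$ where $f=[\norm{G}{P}:\cent{G}{P}]$ is the inertial index, which must divide $|\Aut(\mathsf{C}_9)|=6$, so $f\in\{1,2,3,6\}$. A direct computation gives $k(B_0)=9,6,\tfrac{11}{3},7$ respectively; only $f=2$ yields $6$ (and $f=3$ is not even an integer, confirming $f\nmid 8$ is excluded since $f$ must divide both $6$ and make $8/f$ integral), pinning down case (i). For the noncyclic case $P=\mathsf{C}_3\times\mathsf{C}_3$ this tool is unavailable, so the strategy must instead use the structure of $E:=\norm{G}{P}/\cent{G}{P}\leq\GL_2(3)$ acting on $P$. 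The key input is that for abelian defect groups the block invariants are controlled by the inertial quotient together with Brauer's results on blocks with abelian defect; concretely I would appeal to the known determination of $k(B_0)$ for principal blocks with defect group $\mathsf{C}_3\times\mathsf{C}_3$ in terms of the possible subgroups $E\leq\GL_2(3)$ (these are classified, and the relevant invariants $k(B_0)$, $l(B_0)$ have been computed, e.g. via \cite{KS21}, \cite{RSV21}, or Kiyota's/Sambale's analysis of blocks of $3\times3$ defect). One runs through the conjugacy classes of subgroups of $\GL_2(3)$, reads off $k(B_0)$ for each, and extracts precisely those giving $6$: these turn out to be $E\cong\mathsf{C}_4$, $E\cong\mathsf{Q}_8$, and the fixed-point-free $\mathsf{C}_2$ (i.e. $E=\langle-I\rangle$ inverting $P$), yielding case (ii).

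The main obstacle I anticipate is the noncyclic case: unlike the clean cyclic formula, establishing the exact value $k(B_0)=6$ for each admissible $E$ requires either a reliable citation to the literature on $3$-blocks with defect group $\mathsf{C}_3\times\mathsf{C}_3$ (where $k(B_0)$ and $l(B_0)$ are tabulated against the inertial quotient) or an independent verification. The delicate point is that the inertial quotient alone does not always determine $k(B_0)$ (Donovan-type subtleties), so I would need to confirm that for principal blocks with this small abelian defect group the invariants are indeed governed by $E$, ruling out exotic fusion. I expect the cleanest route is to realize each target via an explicit group—for $E\cong\mathsf{C}_2$ acting by inversion the semidirect products $\mathsf{C}_2\ltimes\mathsf{C}_9$ and $\mathsf{C}_2\ltimes(\mathsf{C}_3\times\mathsf{C}_3)$ mentioned in the introduction, and for $\mathsf{C}_4,\mathsf{Q}_8$ suitable solvable extensions—thereby proving the reverse direction by construction and forcing the forward direction by elimination, since Theorem~\ref{thm:kB0=6repeated} has already excluded every $|P|\neq9$. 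Finally, I would note the asymmetry flagged in the introduction: when $|P|=9$ one can have $k(B_0)\in\{6,9\}$, so the statement is a genuine biconditional constraining $E$, not merely a constraint on $|P|$; verifying that the remaining values of $E$ (e.g. $E=1$ giving $k(B_0)=9$, or $E\cong\mathsf{C}_2$ with a nontrivial fixed point) fall outside $k(B_0)=6$ completes the ``only if'' direction.
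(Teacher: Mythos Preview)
Your outline matches the paper's proof almost exactly: reduce to $|P|=9$ via Theorem~\ref{thm:kB0=6repeated}, handle $P=\mathsf{C}_9$ by Dade's formula $k(B_0)=f+8/f$, and for $P=\mathsf{C}_3\times\mathsf{C}_3$ run through the $3'$-subgroups $E\leq\GL_2(3)$ and read off $k(B_0)$ for each. The one place where your proposal is vague is exactly the one you flagged: how to know that $k(B_0)$ for the noncyclic case is determined by the inertial quotient $E=\norm G P/\cent G P$. The paper resolves this not by citing tabulations of block invariants but by invoking Koshitani--Kunugi \cite{Kos02}, who proved Brou\'e's abelian defect group conjecture for principal $3$-blocks with defect group $\mathsf{C}_3\times\mathsf{C}_3$; this yields $k_0(B_0)=k_0(B_0(\norm G P))$, and since $P$ is abelian one has $k_0=k$ on both sides by \cite{KM13}, giving the clean identity $k(B_0)=k(\norm G P/\oh{p'}{\norm G P})=k(P\rtimes E)$. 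From there it is a finite check over the eight $3'$-subgroups $1,\mathsf{C}_2,\mathsf{C}_4,\mathsf{C}_2\times\mathsf{C}_2,\mathsf{C}_8,\mathsf{D}_8,\mathsf{Q}_8,\mathsf{SD}_{16}$ of $\GL_2(3)$ (via \cite{VV85} or \cite{GAP}), and this simultaneously handles both directions of the biconditional. One minor efficiency: in the cyclic case the paper notes up front that $f=|\norm G P/\cent G P|$ must be a $3'$-divisor of $|\Aut(\mathsf{C}_9)|=6$, so $f\in\{1,2\}$ and there is no need to test $f=3,6$.
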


\begin{proof}
In either implication of the statement, by Theorem \ref{thm:kB0=6},
we have $p=3$ and $|P|=9$. In particular, $P$ is abelian and
$k_0(B_0)=k(B_0)$ by \cite{KM13}.

If $P$ is cyclic then $\norm G P / \cent G P $ is a $3'$-subgroup of
$\aut(P)\cong {\sf C}_6$. By Dade's cyclic-defect theory
\cite[Theorem 1]{Dade66},  we have $k(B_0)=8/f+f$, where $f:=|\norm
G P / \cent G P|$. The only possibilities are that $k(B_0)=6$ with
$f=2$ or $k(B_0)=9$ with $f=1$.

If $P$ is elementary abelian, then Brou\'{e}'s abelian defect
conjecture holds for $B_0$ by \cite{Kos02}. In particular, we have
$k_0(B_0)=k_0(B_0(\bN_G(P)))$. Since every irreducible character of
$\bN_G(P)$ has $p'$-degree,
\[k_0(B_0(\bN_G(P)))=k(B_0(\bN_G(P)))=k(\bN_G(P)/\bO_{p'}(\bN_G(P))),\]
where the second equality follows from Lemma
\ref{lem:blockabove}(ii). In summary,
\[k(B_0)=k(\bN_G(P)/\bO_{p'}(\bN_G(P))).\] Note that, as $P$ is
abelian, we have $\bC_G(P)\cong \bO_{p'}(\bN_G(P))\times P$, so that
$\bN_G(P)/\bO_{p'}(\bN_G(P))$ is a semidirect product of the
$3'$-group
\[A:=\bN_G(P)/\bC_G(P)\] acting faithfully (and coprimely) on $P$.
Since  $\Aut(P)\cong \GL_2(3)$, there are eight possibilities for
the $3'$-group $A \leq \GL_2(3)$: \[1, {\sf C}_2, {\sf C}_4, {\sf
C}_2\times {\sf C}_2, {\sf C}_8, {\sf D}_8, {\sf Q}_8, \text{ and }
{\sf SD}_{16}.\] (Here ${\sf SD}_{16}$ is the semi-dihedral group of
order $16$.) While ${\sf C}_2$ can either invert every element of
$P$ or invert just elements in one factor ${\sf C}_3$ and fix
elements of the other, each of the other possibilities can only act
on $P$ in a unique way. Using \cite{GAP}, we easily check that the
corresponding semidirect products have 6 or 9 conjugacy classes.
Indeed, a straightforward inspection of \cite[Table 1]{VV85} reveals
that $k(PA)=6$ if and only if $A\in\{{\sf C}_4, {\sf Q}_8\}$ or
$A={\sf C}_2$ acts on $P$ by inversion. This completes the proof.
\end{proof}

We conclude the paper with a summary of what is now known on the
problem of classifying defect groups of blocks with a ``relatively small'' number
of irreducible ordinary characters.
In Table \ref{table:1}, as usual, we have used $B$ for an arbitrary
block and $B_0$ for a principal one. Also, $k(B)$ and $l(B)$
 denote the numbers of irreducible ordinary and Brauer
characters, respectively, in $B$. A defect group of $B$ is denoted by $D$ and, in
the case of principal blocks, by $P$. Moreover, a root of $B$ is
denoted by $b$ -- a block of $D\bC_G(D)$ with defect group $D$ that induces $B$,
see \cite[Theorem 9.7 and p. 198]{nbook}. The second column
presents all the possible defect-group structures (for a defect group
$D$) of a block with corresponding given values of $k(B)$ and $l(B)$
in the first column. The third column presents the ``if and only if"
conditions for those values to be achieved, with a note that a
missing item indicates no conditions being needed; as usual $T(b)$ denotes the stabilizer
of the block $b$ as in \cite[p. 193]{nbook}.

\begin{table}[ht]
\caption{Defect groups of small blocks\label{table:1}}
\begin{center}
\begin{tabular}{llll}
\hline

\begin{tabular}{l}$k(B),l(B)$ \end{tabular}& \begin{tabular}{l} Defect group \end{tabular}&
\begin{tabular}{l}Local condition \end{tabular}& Reference \\

\hline

\begin{tabular}{l}$k(B)=1$\end{tabular}& \begin{tabular}{l} $1$\end{tabular}&  & well-known  \\
\hline
\begin{tabular}{l}$k(B)=2$\end{tabular}& \begin{tabular}{l} ${\sf C}_2$ \end{tabular}  & & \cite{Brandt} \\
\hline
\begin{tabular}{l}$k(B_0)=3$\end{tabular}& \begin{tabular}{l} ${\sf C}_3$ \end{tabular} & & \cite{Belonogov,KS21} \\

\hdashline

\begin{tabular}{l} $k(B)=3$,\\ $l(B)=1$ \end{tabular}
& \begin{tabular}{l} ${\sf C}_3$ \end{tabular} & & \cite{Kul84} \\

\hdashline

\begin{tabular}{l} $k(B)=3$,\\ $D\nor G$ \end{tabular}
& \begin{tabular}{l} ${\sf C}_3$ \end{tabular} & & \cite{KNST14} \\

\hline

\begin{tabular}{l}$k(B_0)=4$\end{tabular}&  \begin{tabular}{l} ${\sf C}_2\times {\sf C}_2$\\
${\sf C}_4$\\ ${\sf C}_5$\end{tabular}
& \begin{tabular}{l} \\ \\ $|\bN_G(P)/\bC_G(P)|=2$ \end{tabular} & \cite{KS21} \\

\hdashline

\begin{tabular}{l} $k(B)=4$,\\ $l(B)=1$ \end{tabular}
& \begin{tabular}{l} ${\sf C}_2\times {\sf C}_2$\\ ${\sf C}_4$ \end{tabular} & & \cite{Kul84} \\

\hdashline

\begin{tabular}{l} $k(B)=4$,\\ $D\nor G$ \end{tabular}&  \begin{tabular}{l} ${\sf C}_2\times {\sf C}_2$\\
${\sf C}_4$\\ ${\sf C}_5$\end{tabular}
& \begin{tabular}{l} \\ \\ $|T(b)/\bC_G(D)|=2$ \end{tabular} & \cite{MRS22} \\

\hline

\begin{tabular}{l}$k(B_0)=5$\end{tabular}&  \begin{tabular}{l} ${\sf C}_5$\\ ${\sf C}_7$\\ ${\sf
D}_8$\\ ${\sf Q}_8$ \end{tabular}&

\begin{tabular}{l}
$|\bN_G(P)/\bC_G(P)|\in\{1,4\}$\\
$|\bN_G(P)/\bC_G(P)|\in\{2,3\}$\\ \\
$|\bN_G(P)/P\bC_G(P)|=1$
\end{tabular}
& \cite{RSV21} \\

\hdashline

\begin{tabular}{l} $k(B)=5$, \\ $l(B)=1$ \end{tabular}
& \begin{tabular}{l} ${\sf C}_5$\\ ${\sf D}_8$\\ ${\sf
Q}_8$\end{tabular} & \begin{tabular}{l} \\ $B$ is nilpotent.\\
$B$ is nilpotent.\end{tabular}
& \cite{CK92} \\

\hline

\begin{tabular}{l}$k(B_0)=6$\end{tabular}&

\begin{tabular}{l} ${\sf C}_9$ \\
{}\\
${\sf C}_3\times {\sf C}_3$\\ { }
\end{tabular} &

\begin{tabular}{l}
$|\bN_G(P)/\bC_G(P)|=2$\\

$\bN_G(P)/\bC_G(P)\in\{{\sf C}_4, {\sf Q}_8\}$ or\\
$\bN_G(P)/\bC_G(P)={\sf C}_2$\\
acts on $P$ by inversion.
\end{tabular}

& Theorem \ref{thm:kB0=6-main}\\

\hline

\begin{tabular}{l} $k(B_0)=7$,\\ $l(B_0)=6$ \end{tabular}
& \begin{tabular}{l} ${\sf C}_7$ \end{tabular}
& \begin{tabular}{l} $|\bN_G(P)/\bC_G(P)|=6$\end{tabular}& \cite{HST21} \\

\hline
\end{tabular}
\end{center}
\end{table}

\newpage


\end{document}